\definecolor{shadecolor}{rgb}{0.3,0.7,0.9}
\tikzset{node distance=2cm, auto}
\theoremstyle{remark}
\newtheorem{example}{Example}[section]
\newtheorem{remark}[example]{Remark}
\theoremstyle{definition}
\newtheorem{definition}[example]{Definition}
\theoremstyle{plain}
\newtheorem{proposition}[example]{Proposition}
\newtheorem{corollary}[example]{Corollary}
\newtheorem{theorem}[example]{Theorem}
\newtheorem{lemma}[example]{Lemma}
\newcommand{\etale}{\'{e}tale}
\newcommand{\ZZ}{\mathbb{Z}}
\newcommand{\QQ}{\mathbb{Q}}
\renewcommand{\AA}{\mathbb{A}}
\newcommand{\sC}{\mathcal{C}}
\newcommand{\sO}{\mathcal{O}}
\newcommand{\fX}{{\mathfrak X}}
\newcommand{\fY}{{\mathfrak Y}}
\newcommand{\tX}{\tilde{X}}
\newcommand{\ftX}{\tilde{\mathfrak X}}
\newcommand{\spec}{{\rm Spec}}
\renewcommand{\sl}{{\rm SL}}
\newcommand{\et}{{\acute{e}t}}
\DeclareMathOperator{\aff}{Aff}
\DeclareMathOperator{\sch}{Sch}
\DeclareMathOperator{\fint}{fin.type}
\renewcommand{\top}{{\rm Top}}
\DeclareMathOperator{\fin}{fin}
\DeclareMathOperator{\bA}{{\bf A}}
\newcommand{\cS}{\cancel{S}}
\DeclareMathOperator{\gl}{GL}
\DeclareMathOperator{\Spec}{Spec}
\DeclareMathOperator{\Br}{Br}
\DeclareMathOperator{\inv}{inv}
\newcommand{\Gm}{{\mathbb{G}_{\mathrm{m}}}}
\newextarrow{\xbigtoto}{{20}{20}{20}{20}}
   {\bigRelbar\bigRelbar{\bigtwoarrowsleft\rightarrow\rightarrow}}
   \tikzset{
    labl/.style={anchor=south, rotate=90, inner sep=.5mm}
}
\newcommand{\colim@}[2]{%
  \vtop{\m@th\ialign{##\cr
    \hfil$#1\operator@font colim$\hfil\cr
    \noalign{\nointerlineskip\kern1.5\ex@}#2\cr
    \noalign{\nointerlineskip\kern-\ex@}\cr}}%
}
\newcommand{\colim}{%
  \mathop{\mathpalette\colim@{\rightarrowfill@\textstyle}}\nmlimits@
}
\title[Classifying stacks]{ Approximation theorems for classifying stacks over number fields}
\author[A. Dhillon]{Ajneet Dhillon}
\email{adhill3@uwo.ca}
\begin{document}

\begin{abstract}
    Approximation theorems for algebraic stacks over a number field $k$ are studied in this article. For $G$ a connected linear algebraic 
    group over a number field we prove strong approximation with Brauer-Manin obstruction for the classifying stack $BG$. This result answers a very concrete
    question, given $G$-torsors $P_{v}$ over $k_{v}$, where $v$ ranges over a finite number of places, when can you approximate the $P_{v}$
    by a $G$-torsor $P$ defined over $k$. 
\end{abstract}

\maketitle
\section{Introduction}
This paper is motivated by the following question,
let $G$ be a  connected linear algebraic group over a number field $k$ and let $v_{1}, \ldots , v_{n}$ be some non-archimedean places of $k$. 
Denote by $k_{v_i}$ the completion of $k$ at $v_{i}$. 
Fix $G$-torsors $P_i$  over  each $k_{v_i}$. Given this data, can one find a $G$-torsor $P$ over $k$ that specialises to 
each of the fixed torsors?
We provide a condition (vanishing of the Brauer-Manin obstruction) on the $P_i$ in order to guarantee the existence of a $P$ that is $v_{i}$-adically close to $P_{i}$. In 
this context, the notion of being $v_i$-adically close is defined in sections 2.3 and 2.4. 
It is worth remarking here that Lang's theorem implies that every $G$-torsor over $\sO_{v_i}$ is trivial where $\sO_{v_i}$ is the completion of 
the ring of integers at the place non-archimedean place $v_i$ so that this question is a version of the strong approximation property for the classifying stack of $G$-torsors. 
Note that for an algebraic group, being connected and geometrically connected are the same, as the identity component will be preserved by Galois actions.

We let $BG$ be the classifying stack of $G$-torsors and let $\bA_{k}$ be the adele ring of $k$. In section \ref{s:topology}, we equip the adelic points of $BG$ with the structure
of a topological space. The essential idea for this construction can be found in \cite{christensen2020}. 
There is a map $BG(k)\to BG(\bA_{k})$ induced from the natural embedding $k\to \bA_{k}$, here $BG(k)$ (resp. $BG(\bA_k)$) is the set, not groupoid of $k$-points
(resp. $\bA_k$-points) of $BG$. 
This map need not be surjective nor injective, as discussed in the next paragraph. 
Our results compute the closure of the image of $BG(k)$ inside $BG(\bA_{k})$, see Theorem \ref{t:sApproxBG}.
A variant of this theorem that applies to quotient stacks is produced in Theorem \ref{t:roy}. The main source of examples to which this theorem applies are 
generated by quotients of groupic varieties, see \cite{cao2018} and the discussion after Theorem \ref{t:roy}. Note however, Theorem \ref{t:sApproxBG} is not a 
corollary of Theorem \ref{t:roy}. 

The map $BG(k)\to BG(\bA_{k})$ has been studied in classically in a different guise, as a map on Galois cohomology. For ${\rm PGL}_n$ the map 
need not be surjective as class field theory describes the Brauer group of a number field in terms of local Brauer groups and hence puts a restriction
upon which ${\rm PGL}_n$-torsors can lift. The non-injectivity of the map is rather subtle and it often is injective. The injectivity is known as 
the Hasse principle and it may fail for some groups. For a general discussion see \cite[pg. 285]{rapinchuk}.

Given a scheme $X$ of finite type over $k$, it is well known how to equip $X(k_{v})$ with the structure of a topological space, see \cite{conrad2020}. 
The insight of \cite{christensen2020} is if $\fX$ is a finite type algebraic stack and $X\to \fX$ a smooth surjective presentation then $\fX(k_{v})$
should inherit the quotient topology from $X(k_{v})$ if the properties of the topologization on schemes are to extend to algebraic stacks. 
The paper \cite{christensen2020} provides a topologization on schemes over fields or rings more general than just a number field or a global field. 
Unfortunately, in topologizing the adelic points of $\fX$ the paper \cite{christensen2020} contains an error, c.f. \cite[5.0.3]{christensen2020}, as projective modules over the adele ring of $k$ need not 
be trivial. Indeed over a product of rings it is straightforward to construct non-trivial examples of projective modules. 
Only those of locally constant rank are trivial, see Proposition \ref{p:projTriv}. In \S 2 we develop a workaround for a broad class of algebraic stacks that includes 
all algebraic stacks of interest to us. It should be noted however, the essential idea in this workaround is based on the paper \cite{christensen2020}. 
In private email communication Atticus Christensen has indicated that he has a correction to this issue and it is hoped that his arxiv paper will be updated
with this correction in the future. 

 There is a natural map $\fX(k)\to \fX(\bA_k)$ and one says that strong approximation holds for $\fX$ if this map has dense image. Unfortunately, this map 
 rarely has dense image and one is led to consider refinements to strong approximation.

The Brauer group $\Br(\fX)$ produces an obstruction to the closure of $\fX(k)$ being all of $\fX(\bA_{k})$. 
In section \ref{s:BM}, we construct the 
Brauer-Manin pairing for algebraic stacks:
$$
\langle -,- \rangle : \fX(\bA_{k})\times \Br(\fX)\to \QQ/\ZZ 
$$
and prove its main properties needed for this paper. 
The pairing vanishes on $k$-rational points of $\fX(k)$ and the vanishing locus is a closed subset of $\fX(\bA_{k})$. In the case of varieties and schemes
this obstruction is well-known, see \cite{sansuc1981}, \cite{demarche2011}, \cite{Harari2008}, \cite{Morishita1996}, \cite{jlct2009} and \cite{borovoi2013}. 
In \ref{d:strongApprox}, we define strong approximation with respect to a subgroup of the Brauer group. 

Our approach to questions of strong approximation for algebraic stacks of a particular type, see section 2, 
is by reducing the problem to quotients via special groups. This idea is well-known see for example \cite{behrend2007} and \cite{edidin01}. 
Given a quotient stack $[X/G]$ where $G$ is a linear algebraic group over $k$ then we can always write it as a quotient stack of the form $[X'/H]$ where 
 $H$ is a special group. Indeed, we can fix a faithful representation $G\hookrightarrow \gl(V)$ or a representation $G\hookrightarrow \sl(V)$ and write 
 $$
 [X\times_{G}\gl(V)/\gl(V)]\cong [X/G]\cong [X\times_{G}\sl(V)/\sl(V)]. 
 $$
 
A linear algebraic group $G$ always admits a faithful representation into $\sl(V)$, see the discussion at the start of \S \ref{ss:strongApprox}. 
After fixing $G\hookrightarrow \sl(V)$  we are able to produce a stream lined approach to strong approximation as the pullback map 
 $$
 \Br([X/G]) \to \Br(X\times_{G}\sl(V))
 $$
 can be shown to be an isomorphism, see Corollary \ref{c:brIsom}. 
This reduces the question of strong approximation to one about  approximation on $X\times_{G}\sl(V)$. When $X=\spec(k)$ and $G$ is a  connected 
linear algebraic group this has been considered in \cite{borovoi2013} and \cite{jlct2009}. In these papers, a kind of equivariant version of strong approximation,
weaker than strong approximation as we have defined it, is proved. This result is sufficient for our purposes. 
These works are a culmination of a long development of ideas, see also \cite{Harari2008} and \cite{borovoiBrauerManinObstructionsHomogeneous1996}. 
We borrow their ideas to settle
strong approximation for classifying stacks $BG$ where $G$ is a  connected linear algebraic group.
Further examples of algebraic stacks to which strong approximation applies can be generated by considering certain quotients of smooth groupic varieties, see 
\cite{cao2018} and the discussion after Theorem \ref{t:roy}. A familiar example of a groupic variety is a toric variety and our results apply to certain quotient stacks of toric 
varieties.

In section 2 of the paper we introduce the topology on the adelic points of a algebraic stack and prove the main properties of this topology. 
The previously pointed out error in \cite{christensen2020} is corrected here. However, many of the other ideas presented in this section are borrowed 
from \emph{op. cit.}

Section 3 discusses the  Brauer group of an 
algebraic stack is defined, in this paper to be the cohomological Brauer group. We briefly discuss how to spread out a Brauer class in this section. 
As this result is mostly a standard extension of a known result for schemes we only sketch the required argument. 

In section 4, we construct the Brauer-Manin pairing on an algebraic stack
and prove its main properties. In particular, it is shown that the locus inside the adelic points of an algebraic stack consisting of points 
orthogonal to the Brauer-Manin pairing form a closed subset that contains the $k$-rational points. This produces an obstruction to strong approximation
for the entire stack. In the final subsection, we recall the definition of strong  approximation with respect to the Brauer-Manin pairing.

\section*{Acknowledgements}
The author was partially supported by NSERC. The author would like to thank Connor Cassady, Roy Joshua and Atticus Christensen for valuable conversations and
email correspondence. 

The author would also like to thank the two anonymous referees who provided numerous valuable suggestions.

\SetTblrInner[longtblr]{rowsep=0pt}
\begin{longtblr}[caption={Notation}]
    {
        colspec = {cl}}
    $[X/G]$ & If $G$ is a group scheme acting on the scheme $X$ then this is \\  
& the corresponding quotient stack. \\
   $k$& A number field. \\ 
   $\sO_{k}$& The ring of integers in $k$.\\ 
   $\Omega_{k}$ & The set of places of $k$. \\
   $\Omega_{k}^{\fin}$& The set of non-archimedean places of $k$. \\ 
   $\Omega_{k}^{\infty}$ & The set of archimedean places of $k$. \\ 
   $k_{v}$ & The completion of $k$ at the valuation $v$. \\
   $\sO_{v}$ & The valuation subring of $k_{v}$. \\ 
   $k_{S}$ & For some finite subset $S$ of $\Omega_{k}$ this is the ring $\prod_{v\in S} k_{v}$. \\
   $k^{T}$ & For some finite subset $T$ of $\Omega_{k}$ this is the ring $\prod_{v\in T} k_{v}\times \prod_{v\not\in T}\sO_{v}$\\
   $(k^{T})_{\cS}$ & For some finite subsets $S$ and $T$ of $\Omega_{k}$ this is the projection of $k^{T}$ into \\ 
      &   $\prod_{v\not\in S} k_{v}$ \\
   $\bA_{k}$ & The adele ring of $k$ \\ 
   $\bA_{\cS, k}$ & For a finite subset $S\subseteq \Omega_{{k}}$, this is the image of $\bA_{k}$ \\ &under projection into the product $\prod_{v\not\in S}k_{v}$. \\ 
   $\Br(\fX)$ & The Brauer group of a stack or scheme. \\ 
   $\Br_{1}(\fX)$ & The subgroup of the Brauer group trivialised by passage to  an  \\ & algebraic closure. Precisely $\ker(\Br(\fX)\to \Br(\fX\otimes \overline{k} ))$. \\
   $\Br_{a}(\fX)$ & The quotient of the $\Br_{1}(\fX)$ by the image of $\Br(k)$. \\ 
   $G$ & A linear algebraic group over $k$. Often it will be  \\ & assumed to be  connected. \\ 

\end{longtblr}


\section{The topological structure of the adelic points of an algebraic stack} \label{s:topology}

Given an algebraic stack $X$ over a global field, the paper \cite{christensen2020} associates to it a topological 
space with underlying set the adelic points of $X$. Unfortunately, the construction has an error at a critical point, see \cite[Remark 12.0.6]{christensen2020}. 
Indeed given a product of rings then there are non-free finite rank projective modules over such a product so such a product cannot be sufficiently disconnected 
in the terminology of \cite[Definition 5.0.3]{christensen2020}. 

In this paper we will sidestep this issue, and combine the basic idea of \cite{christensen2020} with an  idea from \cite{behrend2007} to construct a topological space associated to 
a suitable category of algebraic stacks over a number field. This category will contain all stacks that we are interested in for this paper.

\subsection{A little lemma}

We will make extensive use of the following. 

\begin{lemma}\label{l:little}
    Let $X/k$ be a scheme of finite type and $H\hookrightarrow G$ an closed inclusion of linear algebraic groups over $k$. Suppose that $H$ acts on $X$. Then 
    we have an isomorphism
    $$
    [X/H] \cong [X\times_{H}G/ G]
    $$
    of quotient stacks. 
\end{lemma}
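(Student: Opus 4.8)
The plan is to prove the equivalence directly on the level of functors of points, by exhibiting mutually quasi-inverse morphisms of stacks together with the requisite natural isomorphisms. Recall that the contracted product $Y:=X\times_{H}G$ is the quotient of $X\times G$ by the left $H$-action $h\cdot(x,g)=(h\cdot x,\,gh^{-1})$, and that the residual left $G$-action $g'\cdot[x,g]=[x,g'g]$ commutes with it. The first thing I would record is the $G$-equivariant projection $p\colon Y\to G/H$, $[x,g]\mapsto gH$, whose fibre over the base point $eH$ is the image of the $H$-equivariant immersion $X\hookrightarrow Y$, $x\mapsto[x,e]$, and on which the stabilizer $H$ of $eH$ acts through the original $H$-action on $X$. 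This map $p$ is the geometric backbone of the argument: it exhibits $Y$, $G$-equivariantly, as the induction of the $H$-space $X$.

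Next I would write down the two functors. For a $k$-scheme $T$, an object of $[X/H](T)$ is a pair $(P,f)$ consisting of an $H$-torsor $P\to T$ and an $H$-equivariant map $f\colon P\to X$; an object of $[Y/G](T)$ is a pair $(Q,\psi)$ with $Q\to T$ a $G$-torsor and $\psi\colon Q\to Y$ a $G$-equivariant map. In the forward direction I would send $(P,f)$ to the induced $G$-torsor $Q:=P\times_{H}G$ together with the $G$-equivariant map $[p,g]\mapsto[f(p),g]$. In the reverse direction, given $(Q,\psi)$ I would compose with $p$ to obtain a $G$-equivariant map $s:=p\circ\psi\colon Q\to G/H$, set $P:=Q\times_{G/H}\Spec k$ to be the fibre of $s$ over the base point $eH$, and restrict $\psi$ to $P$; since $p\circ\psi|_{P}$ is the constant map $eH$, this restriction lands in $p^{-1}(eH)=X$ and is $H$-equivariant.

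The key input, which I would isolate as the heart of the proof, is the classical dictionary between three kinds of data attached to a $G$-torsor $Q\to T$: a $G$-equivariant map $Q\to G/H$; a reduction of the structure group of $Q$ to $H$; and an $H$-torsor $P\to T$ equipped with an isomorphism $Q\cong P\times_{H}G$. This equivalence is exactly fppf descent of torsors, and it is available because $H\hookrightarrow G$ is a closed subgroup, so that $G/H$ is representable and $G\to G/H$ is an $H$-torsor. Granting it, the two functors above are mutually quasi-inverse: running induction and then reduction returns the fibre $\{[p,e]\}\cong P$ with $f$ recovered, while running reduction and then induction produces the canonical isomorphism $P\times_{H}G\xrightarrow{\sim}Q$, $[p,g]\mapsto g\cdot p$, under which the reconstructed equivariant map matches $\psi$.

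The main obstacle I anticipate is not any single computation but the bookkeeping required to promote these pointwise assignments to honest morphisms of stacks and to produce the natural isomorphisms witnessing that the two composites are isomorphic to the respective identities. In particular one must check compatibility with base change along $T'\to T$ and with isomorphisms of torsors, and keep the left/right action conventions rigidly consistent throughout; these are the places where care, rather than depth, is needed. I would close by noting that the construction is natural in $X$, which is what makes the isomorphism compatible with the equivariant maps used elsewhere in the paper.
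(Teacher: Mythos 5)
Your argument is correct and is precisely the definitional unwinding that the paper itself invokes (its proof of Lemma~\ref{l:little} is just ``follows easily from definitions''): the induction/reduction dictionary for torsors via $G$-equivariant maps to $G/H$, which is available because $H\hookrightarrow G$ is closed so that $G/H$ is representable and $G\to G/H$ is an $H$-torsor. No gaps; your write-up simply makes explicit what the paper leaves to the reader.
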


\begin{proof}
    The proof follows easily from definitions. 
\end{proof} 

\subsection{Vector bundles over the adeles}

\begin{proposition}\label{p:projTriv}
    Let $V$ be a $k$-vector space of finite dimension and let $S\subset \Omega_{k}$ be some finite subset. 
    Every $\gl(V)$-torsor over $\spec(\bA_{\cS,k})$ is trivial. 
\end{proposition}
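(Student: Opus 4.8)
The plan is to translate the assertion into commutative algebra and then exploit that $\bA_{\cS,k}$, while not literally a product of local rings, is an increasing union of such products. By descent a $\gl(V)$-torsor over an affine scheme $\spec(R)$ is the same datum as a rank-$n$ vector bundle, i.e.\ a finitely generated projective $R$-module $P$ of constant rank $n=\dim_{k}V$, the trivial torsor corresponding to the free module $R^{n}$. So the goal becomes: every finitely generated projective $\bA_{\cS,k}$-module of constant rank $n$ is free.

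First I would spread $P$ out to a genuine product. Write $P=\im(e)$ for an idempotent $e\in M_{m}(\bA_{\cS,k})$. Using the presentation $\bA_{\cS,k}=\bigcup_{T}(k^{T})_{\cS}$ as a filtered union over finite $T\subset\Omega_{k}$, the finitely many entries of $e$ already lie in some $R_{T}:=(k^{T})_{\cS}$; since the inclusion $R_{T}\hookrightarrow\bA_{\cS,k}$ is injective, the relation $e^{2}=e$ holds in $M_{m}(R_{T})$, so $P_{0}:=eR_{T}^{m}$ is a finitely generated projective $R_{T}$-module with $P_{0}\otimes_{R_{T}}\bA_{\cS,k}\cong P$. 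The gain is that $R_{T}=\prod_{v\notin S}A_{T,v}$ is now an honest (generally infinite) product of local rings, with $A_{T,v}=k_{v}$ for $v\in T$ and $A_{T,v}=\sO_{v}$ otherwise.

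Over such a product the idempotent splits as $e=(e_{v})_{v}$, whence $P_{0}=\prod_{v}P_{0,v}$ with $P_{0,v}=e_{v}A_{T,v}^{m}$ finitely generated projective over the local ring $A_{T,v}$, hence free of some rank $n_{v}$. It remains to check that all $n_{v}$ equal $n$, and this is the single place where constancy of the rank is indispensable: for each $v\notin S$ the idempotent $\epsilon_{v}\in\bA_{\cS,k}$ supported at $v$ cuts out the prime $\mathfrak q_{v}=\ker(\bA_{\cS,k}\to k_{v})$ with residue field $k_{v}$, and the rank of $P$ at $\mathfrak q_{v}$ equals $\dim_{k_{v}}(P_{0,v}\otimes_{A_{T,v}}k_{v})=n_{v}$. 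Constant rank therefore forces $n_{v}=n$ for every $v$, so $P_{0}\cong\prod_{v}A_{T,v}^{n}=R_{T}^{n}$ is free, and hence so is $P$.

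I expect the main obstacle to be exactly that $\bA_{\cS,k}$ is a \emph{restricted} rather than a full product: one cannot decompose $P$ directly, and moreover $\spec(\bA_{\cS,k})\to\spec(R_{T})$ is not surjective (the closed points of the $\sO_{v}$-factors are not in the image), so constant rank of $P_{0}$ on all of $\spec(R_{T})$ cannot simply be inherited from $P$. This is why I would read off the ranks $n_{v}$ at the specific primes $\mathfrak q_{v}$ rather than attempt a naive base-change of the rank function; once that is done, the freeness of a constant-rank projective over an infinite product of local rings is a routine consequence of the idempotent-matrix description used above.
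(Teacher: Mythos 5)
Your proof is correct and follows essentially the same route as the paper: translate the torsor into a constant-rank projective module, realize it via an idempotent matrix defined over $(k^{T})_{\cS}$ for some finite $T$, split over the resulting product of local rings, and use constancy of rank to conclude freeness. If anything, you are more careful than the paper at the one delicate step — justifying that each local factor has rank exactly $n$ by evaluating at the primes $\ker(\bA_{\cS,k}\to k_{v})$, where the paper simply asserts this from constancy of rank.
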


\begin{proof}
    Giving a principal $\gl(V)$-bundle over $\spec(\bA_{\cS,k})$, for some
finite subset $S\subseteq \Omega_{k}$, is equivalent to giving a finitely generated projective module $P$ over 
$\bA_{\cS,k}$ of constant rank $\dim V$. 
We can write $P\oplus Q\cong \bA_{\cS,k}^{n}$ for some projective module $Q$. It follows that $P$ is the kernel of $$
p: \bA_{\cS,k}^{n}\twoheadrightarrow Q \hookrightarrow \bA_{\cS,k}^{n}
$$
with $p^2=p$. We write $[p]$ for the matrix representing $p$. It is defined over $(k^{T})_{\cS}$ for some finite subset $T\subset\Omega_{k}$ so that 
$$
[p]\in {\rm Mat}_{n\times n}(k^{T}_{\cS}).
$$
For each $v\in\Omega_{k}\setminus S$ we obtain a matrix $$
[p]_{v}\in {\rm Mat}_{n\times n }(k_{v})
$$
which will have coefficients in $\sO_{v}$ whenever $v\not\in T$. Now $\ker([p]_{v})$ is a projective module as $p$ is idempotent, and it has 
rank $\dim V$ as $P$ has constant rank. As $\sO_{v}$ is local we can find isomorphisms 
$$
\ker([p]_{v}) \cong 
\begin{cases}
    k_{v}^{\dim V} & \text{if }v\in T \\ 
    \sO_{v}^{\dim V} & \text{otherwise}. 
\end{cases}
$$
It follows that $\ker([p])$ is a trivial projective $(k^{T})_{\cS}$-module. As $P$ is obtained from it via base extension, it is trivial also. 
\end{proof}

\begin{corollary}
    Every $\sl(V)$-torsor over $\Spec(\bA_{\cS,k})$ is trivial. 
\end{corollary}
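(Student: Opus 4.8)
The plan is to deduce the corollary from Proposition \ref{p:projTriv} using the central short exact sequence of algebraic groups
$$
1 \to \sl(V) \to \gl(V) \xrightarrow{\det} \Gm \to 1.
$$
Writing $R = \bA_{\cS,k}$, I would pass to the associated long exact sequence of pointed cohomology sets. Since $\sl(V)$, $\gl(V)$ and $\Gm$ are smooth affine group schemes, their $\HH^1$ classifies torsors (and étale and fppf cohomology agree), giving an exact sequence of pointed sets
$$
\gl(V)(R) \xrightarrow{\det} \Gm(R) \xrightarrow{\delta} \HH^1(R, \sl(V)) \to \HH^1(R, \gl(V)).
$$

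First I would observe that $\det : \gl(V)(R) \to \Gm(R) = R^{\times}$ is surjective: for any unit $u \in R^{\times}$ the diagonal matrix $\mathrm{diag}(u, 1, \dots, 1)$ lies in $\gl(V)(R)$ and has determinant $u$. Consequently the connecting map $\delta$ has image reduced to the base point, so exactness at $\HH^1(R, \sl(V))$ says that the fibre of $\HH^1(R, \sl(V)) \to \HH^1(R, \gl(V))$ over the base point is the trivial torsor alone.

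Finally, Proposition \ref{p:projTriv} gives $\HH^1(R, \gl(V)) = \{*\}$, so the map to it carries every class to the base point; combined with the previous step this forces $\HH^1(R, \sl(V)) = \{*\}$, which is exactly the assertion that every $\sl(V)$-torsor over $\Spec(R)$ is trivial. I do not expect a serious obstacle here: the only point requiring care is the bookkeeping of the exact sequence of nonabelian pointed sets, and in particular the surjectivity of $\det$ that kills the connecting map. Alternatively, one may argue directly, viewing an $\sl(V)$-torsor as a projective $R$-module $P$ of rank $\dim V$ equipped with a trivialisation of $\det P$; the proposition makes $P$ free, and then the surjectivity of $\det$ on automorphisms lets one absorb the chosen trivialisation into an automorphism of $P$, identifying the torsor with the trivial one.
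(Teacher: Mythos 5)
Your proof is correct and rests on exactly the same two inputs as the paper's: the triviality of $\gl(V)$-torsors from Proposition \ref{p:projTriv} and the surjectivity of $\det:\gl(V)(R)\to R^{\times}$. The exact sequence of pointed sets is merely a formal repackaging of the paper's own argument, which identifies an $\sl(V)$-torsor with a free module together with a trivialisation of its top exterior power and absorbs that trivialisation by rescaling one basis vector by the idele $\lambda$ --- precisely your matrix $\mathrm{diag}(\lambda,1,\dots,1)$ witnessing the surjectivity of $\det$ --- so the ``alternative'' direct argument you sketch at the end is essentially verbatim the paper's proof.
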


\begin{proof}
    A $\sl(V)$-torsor is a $\gl(V)$-torsor equipped with a reduction of structure group to $\sl(V)$. The reduction of structure 
    group amounts to a trivialization of the top exterior power of the projective $\bA_{\cS,k}$-module  associated to the $\gl(V)$-torsor. We 
    have seen that the projective module is trivial so the reduction amounts to a choice of 
    $$
    w\in \bigwedge^{n} \bA_{\cS,k}^{n}
    $$
    that induces an isomorphism 
    $$
    \bA_{\cS,k} \stackrel{\sim}{\longrightarrow}\bigwedge^{n} \bA_{\cS,k}^{n}. 
    $$
    Any two choices of $w$ can be identified via an automorphism of $\bA_{\cS,k}^n$ and hence the associated $\sl(V)$-torsor is trivial. 
    To make this explicit, one can choose a basis, say $e_{i}$, for the free module $\bA_{\cS,k}^{n}$. Then 
    $$
    w= \lambda e_{1}\wedge \ldots \wedge e_{n}
    $$
    where $\lambda\in \bA_{\sC,k}^{\times}$ is an idele. Then one considers the new basis 
    $$
    \lambda e_{1}, e_{2}, \ldots , e_{n}
    $$
    and the associated change of basis automorphism of $\bA_{\cS,k}^{n}$. 
\end{proof}

  
\subsection{Topologies on schemes and algebraic spaces over topological rings}
Given a finite type scheme $X$ over a local field $k_{v}$, the collection of $k_{v}$-points of $X$ has the structure of 
a topological space. For a finite type scheme $X$ over $k$, its adelic points, $X(\bA_{k})$ also has the structure of a topological space.  

Let $R$ be a Hausdorff topological ring.  We denote by $\aff_{R}^{\fint}$ the category of finite type affine schemes over $R$
and let $\top $ be the category of topological spaces. Then there is a functor $$
    F:\aff_{R}^{\fint}\longrightarrow\top
    $$
that is essentially unique. We refer the reader to \cite[Proposition 2.1]{conrad2020} for a construction of this functor, its basic 
properties and the precise meaning of ``essentially unique''.

\begin{remark}\label{r:units}
    Using this results,
    the collection of units $R^{\times}$ obtains a topology from the closed embedding $\Gm \hookrightarrow \AA^{2}$ as the subscheme $V(xy-1)$. 
    There are two potential topologies on $R^{\times}$, one from the proposition and the other from the subspace topology coming from the topology on $R$. 
    These two topologies need not be the same. In fact, for the main example that we will be interested in, the ring of adeles $R=\bA_{k}$ they are different.
\end{remark}

\begin{definition}
    \label{d:contInversion}
    Let $R$ be a Hausdorff topological ring. We say that $R$ has \emph{continuous inversion} if the two topologies on $R^{\times}$ are the same. 
\end{definition}

The fields $k_{v}$ have continuous inversion. 

As a consequence the inverse map on $R^{\times}$ will be continuous with respect to the subspace topology for a ring with continuous inversion. 
When our ring is local and has continuous inversion the functor above extends to functor on finite type schemes, see \cite[Proposition 3.1]{conrad2020}. 

When $R$ is a completion of $k$ or a completion of $\sO_{k}$ we can say slightly more. 

\begin{proposition}\label{p:smoothQuotient}
    Let $f:X\to Y$ be a smooth morphism between finite type $\sO_{v}$-schemes.
    Let $R=k_{v}$ or $\sO_{v}$. 
     Then 
    $$
    f: X(R)\to Y(R)
    $$
    is an open morphism of topological spaces. Hence, the subspace topology on $f(X(R))$ is quotient topology induced from the topology on $X(R)$. 
\end{proposition}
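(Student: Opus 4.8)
The plan is to reduce, by localizing on the source, to two elementary cases and then to invoke Hensel's lemma. Since an open immersion $U \hookrightarrow X$ of finite type $\sO_v$-schemes induces an open immersion $U(R) \hookrightarrow X(R)$ on topological spaces (a basic property of the functor $F$ of \cite{conrad2020}), and since a continuous map is open as soon as its restriction to each member of an open cover of the source is open, it suffices to prove openness of $f$ after replacing $X$ by the members of a Zariski open cover. Here one uses that $\{U_i(R)\}$ does cover $X(R)$: an $R$-point $\Spec R \to X$ sends the closed point of $\Spec R$ into some $U_i$, and because the only open of the local scheme $\Spec R$ containing its closed point is all of $\Spec R$, the point factors through $U_i$. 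By the local structure theorem for smooth morphisms, every point of $X$ has an open neighbourhood $U$ admitting an \'etale $Y$-morphism $h\colon U \to \AA^n_Y$, so $f|_U$ factors as $U \xrightarrow{h} \AA^n_Y \to Y$. A composite of open maps is open, so I am reduced to showing (i) the projection $\AA^n_Y \to Y$ is open on $R$-points, and (ii) an \'etale morphism is open on $R$-points.

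For (i), the functor $F$ preserves fibre products, so $(\AA^n_Y)(R)$ is homeomorphic to $Y(R) \times R^n$, and the map in question becomes the projection $Y(R) \times R^n \to Y(R)$, which is open because projections off a product are open.

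Case (ii) is the main obstacle, and is precisely where the hypothesis that $R$ is $\sO_v$ or $k_v$ (rather than an arbitrary topological ring) is used. \'Etale morphisms are Zariski-locally standard \'etale, so I may assume $U = \Spec\big((A[t]/(g))_{g'}\big)$ over $Z = \Spec A$, with $g$ monic and $g'$ its derivative, inverted on $U$. I claim the induced map $U(R) \to Z(R)$ is a local homeomorphism, which in particular makes it open. Given $u \in U(R)$ lying over $z = h(u) \in Z(R)$, the point $u$ corresponds to a root $t_0 \in R$ of $g_z$ at which $g'$ is a unit of $R$. Because $\sO_v$ is complete, hence Henselian, and $k_v$ is complete with respect to its valuation, Hensel's lemma (equivalently, the non-archimedean inverse function theorem) guarantees that for all $z'$ in a sufficiently small neighbourhood of $z$ the polynomial $g_{z'}$ has a unique root close to $t_0$, depending continuously on $z'$. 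This produces a continuous local section of $h$ through $u$ and identifies a neighbourhood of $u$ homeomorphically with a neighbourhood of $z$; thus $h$ is a local homeomorphism, and a local homeomorphism is open.

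Finally, the closing assertion is formal: once $f\colon X(R) \to Y(R)$ is known to be open and continuous, the corestriction $f\colon X(R) \to f(X(R))$ is a continuous open surjection onto its image, and any continuous open surjection is a quotient map; hence the subspace topology on $f(X(R))$ coincides with the quotient topology induced from the topology on $X(R)$.
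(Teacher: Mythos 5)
Your proof is correct and follows essentially the same route as the paper: reduce by locality on the source to the standard factorization of a smooth morphism as an \'etale map to $\AA^n_Y$ followed by the projection, handle the projection via the product topology, and handle the \'etale map as a local homeomorphism on $R$-points. The only difference is that where the paper cites \cite[Lemma 5.3]{conrad2020} for the \'etale case, you prove it directly from the standard \'etale form and Hensel's lemma, which is exactly the content of that cited lemma.
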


\begin{proof} 
    Being an open map is local on the domain. In view of the previous proposition we may replace $X$ by an open subscheme. 
    After shrinking $Y$, we can find an open affine cover $V_{i}$ of $X$ and local factorizations of $f$ as 
    $$
    f|_{V^{i}} : V_{i} \to \AA^{n}\times Y \stackrel{\rm proj}{\to} Y
    $$
    where the first map is etale.
    From \cite[Lemma 5.3]{conrad2020} the first map induces a local homeomorphism on $k_v$-points and hence is an open map as $X(\sO_v)$ is an open subset of $X(k_v)$. 
     As the topology on the product, is the product
    topology, the projection is also an open morphism. 
\end{proof}

The above observation is the starting point in \cite{christensen2020} in order to topologize points of stacks.

To define a topology on the adelic points of a scheme we need a different approach as the adele ring does not have continuous inversion. Fix a finite subset $S$ of 
$\Omega_{k}$ containing all infinite places. We denote by $\sO_{k,S}$ the intersection of $k$ with $k^{S}$. Alternatively, 
$$
\sO_{k,S} = \{ x\in k \mid v(x)\ge 0\ \forall v\not\in S\}.
$$

There is an inclusion 
$$
\sO_{k,S}\hookrightarrow k^{S}.
$$

\begin{theorem}
    \label{t:adeleTop} Let $X\in \sch^{\fint}_{\sO_{k,S}}$. Then the natural map 
    $$
    X(k^{S})\to \prod_{v\in S}X(k_{v})\times \prod_{v\not\in S}X(\sO_{v})
    $$
    is a bijection. If we use this to equip $X(k^{S})$ with a topology by using the product topology on the right hand side and the above discussion
    then one obtains a functor to topological spaces. 
    The ring $k^S$ is a Hausdorff topological ring so when $X$ is affine the discussion at the start of this section applies to $X(R)$. The two 
    constructed topologies on $X(R)$ agree. 
\end{theorem}

\begin{proof}
    See \cite[3.6]{conrad2020}. 
\end{proof}

For a scheme $X$ of finite type over $k$ one obtains a topological space structure on its adelic points by first spreading $X$ out to a finite type 
scheme over $\sO_{k,S}$ for some subset $S\subset \Omega_{k}$ and then taking a direct limit of the topological spaces $X(k^{S'})$ for $S'\supseteq S$. 
We refer the reader to \cite[Theorem 3.6]{conrad2020} and the surrounding discussion for a complete argument. Spreading out schemes is discussed in 
\cite[3.2]{poonen2017}. 

The analogue of Proposition \ref{p:smoothQuotient} also holds true for adelic points.

\begin{proposition}\label{p:smoothQuotientAdele}
    Let $f:X\to Y$ be a smooth morphism between finite type $\sO_{k,S}$-schemes. 
    Let $R=k^{S}$. Suppose further that the induced map 
    $$
    f: X(\sO_{v})\to Y(\sO_{v})
    $$
    is surjective for almost all $v\not\in S$. 
     Then 
    $$
    f: X(R)\to Y(R)
    $$
    is an open morphism of topological spaces. Hence, the subspace topology on $f(X(R))$ is quotient topology induced from the topology on $X(R)$. 
\end{proposition}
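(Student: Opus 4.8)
The plan is to reduce everything to the factorwise description of the adelic topology supplied by Theorem \ref{t:adeleTop}, and then to exploit the surjectivity hypothesis to tame an otherwise problematic infinite product. First I would invoke Theorem \ref{t:adeleTop} to identify $X(R)=X(k^{S})$ with the product
$$
\prod_{v\in S} X(k_{v}) \times \prod_{v\notin S} X(\sO_{v})
$$
equipped with the product topology, and likewise for $Y(R)$. Under these identifications the map $f\colon X(R)\to Y(R)$ becomes the product $\prod_{v} f_{v}$ of the local maps $f_{v}\colon X(k_{v})\to Y(k_{v})$ for $v\in S$ and $f_{v}\colon X(\sO_{v})\to Y(\sO_{v})$ for $v\notin S$, each obtained by base changing $f$ along $\sO_{k,S}\to k_{v}$ or $\sO_{k,S}\to \sO_{v}$. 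Continuity of $f$ is automatic from the functoriality asserted in Theorem \ref{t:adeleTop}.

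Next I would record that every factor map $f_{v}$ is open. Base change preserves smoothness and finite type, so for $v\notin S$ the morphism $X_{\sO_{v}}\to Y_{\sO_{v}}$ is a smooth morphism of finite type $\sO_{v}$-schemes and Proposition \ref{p:smoothQuotient} applies with $R=\sO_{v}$. For the finitely many $v\in S$, base change to $k_{v}$ gives a smooth morphism of finite type $k_{v}$-schemes, whose openness on $k_{v}$-points follows from the \'etale-local factorization used in the proof of Proposition \ref{p:smoothQuotient} (for $R=k_{v}$ that argument only sees the generic fibre).

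The heart of the matter, and the step I expect to be the main obstacle, is that a product of open maps over an infinite index set is \emph{not} open in general: for $\prod_{v} f_{v}$ to carry a basic open set to an open set one needs all but finitely many of the factorwise images to fill the target, and this is precisely what the surjectivity hypothesis provides. Concretely, let $U=\prod_{v}U_{v}$ be a basic open subset of $X(R)$, so that $U_{v}$ is the full space $X(k_{v})$ or $X(\sO_{v})$ for every $v$ outside a finite set $F$. Its image is $\prod_{v} f_{v}(U_{v})$, and each factor $f_{v}(U_{v})$ is open by the previous step. Letting $E\subset\Omega_{k}^{\fin}$ denote the finite set of places $v\notin S$ at which $f\colon X(\sO_{v})\to Y(\sO_{v})$ fails to be surjective, for every $v$ outside the finite set $S\cup F\cup E$ we have $U_{v}=X(\sO_{v})$ and $f_{v}$ surjective, hence $f_{v}(U_{v})=Y(\sO_{v})$ is the full target. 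Thus $\prod_{v}f_{v}(U_{v})$ has open factors and full factors outside a finite set, i.e.\ it is a basic open subset of $Y(R)$. Since images commute with unions and every open set is a union of basic opens, $f$ is an open map.

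Finally, the closing assertion is a formal consequence: a continuous open surjection is a quotient map, and corestricting the open map $f$ to its image $f(X(R))$ endowed with the subspace topology keeps it open, continuous and surjective, so the subspace topology on $f(X(R))$ coincides with the quotient topology induced from $X(R)$.
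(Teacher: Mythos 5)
Your proof is correct and follows essentially the same route as the paper: identify $X(k^{S})$ and $Y(k^{S})$ with the product topology via Theorem \ref{t:adeleTop}, apply Proposition \ref{p:smoothQuotient} factorwise, and use the surjectivity hypothesis so that almost all factors of the image are the full target, making the image of a basic open set again a basic open set. You spell out explicitly the point the paper leaves implicit (why an infinite product of open maps needs the surjectivity assumption), which is exactly the right thing to emphasize.
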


\begin{proof}
    Compare \cite[4.0.5]{christensen2020}. A basic open set of $X(R)$ is of the form, 
    $$
    \prod_{s\in S}U_{s} \times \prod_{t\in T}V_{t} \times \prod_{t\not\in T} X(\sO_{t})
    $$
    where $T\subseteq\Omega_{k}$ is a finite set disjoint from $S$, $V_{t}\subseteq X(\sO_{t})$ is open, 
    and $U_{s}\subseteq X(k_{s})$ is open. The result follows from Proposition \ref{p:smoothQuotient} and the definition of the product topology. 
\end{proof}

We will follow the conventions of \cite[\href{https://stacks.math.columbia.edu/tag/0ELT}{Tag 0ELT}]{stacks-project} regarding algebraic spaces. 
An \emph{algebraic space} $X$ is a a sheaf on the fppf site of $\sch$ such that $X\to X\times X$ is representable and there is a scheme $U$ and a 
surjective \'{e}tale morphism $f:U\to X$. Such data is equivalent to giving an \etale\ equivalence relation on a scheme. In the present context, the equivalence relation 
is recovered as 
$$
U\times_{X} U \xbigtoto{} U. 
$$
The morphism $f$ is called a presentation for $X$.

If the base topological ring is a complete valued field, denoted $k_{v}$, then the above constructions can be extended to algebraic spaces. 

\begin{theorem}
    Denote by ${\rm Alg. Sp}_{k_v}$ the category of quasi-separated algebraic spaces locally of finite type over $k_v$. Then the above functor  can be 
    extended to a functor 
    $$
    {\rm Alg. Sp}_{k_v}\to \top. 
    $$
    This functor preserves fibred products, open and closed immersions. \'{E}tale morphism are sent to local homeomorphisms. 
\end{theorem}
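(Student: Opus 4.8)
The plan is to topologize the points of an algebraic space as a quotient of the points of an \etale{} atlas, and to deduce every assertion from the scheme case established above. First I would fix a quasi-separated algebraic space $\mathcal{X}$ locally of finite type over $k_v$ together with an \etale{} presentation $R\rightrightarrows U\to\mathcal{X}$ with $U$ a scheme and $R=U\times_{\mathcal{X}}U$; as $\mathcal{X}$ is quasi-separated the diagonal is quasi-compact, so $R$ is a scheme and the two projections $s,t\colon R\to U$ are \etale{}. The key preliminary point is that $U$ may be chosen so that $U(k_v)\to\mathcal{X}(k_v)$ is surjective. Indeed a quasi-separated algebraic space is decent, and a decent space admits near every point an \etale{} chart from a scheme that induces an isomorphism on residue fields; applied to the image in $|\mathcal{X}|$ of a point $x\in\mathcal{X}(k_v)$ such a chart yields a $k_v$-rational lift of $x$, so after enlarging $U$ by a disjoint union of such charts the map $U(k_v)\to\mathcal{X}(k_v)$ becomes surjective. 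Since $\Hom(\Spec k_v,-)$ preserves fibre products, $R(k_v)=U(k_v)\times_{\mathcal{X}(k_v)}U(k_v)$ is precisely the equivalence relation on $U(k_v)$ given by equality of images in $\mathcal{X}(k_v)$, so $U(k_v)/R(k_v)=\mathcal{X}(k_v)$ as sets. I would then \emph{define} the topology on $\mathcal{X}(k_v)$ to be the quotient topology along $\pi\colon U(k_v)\to\mathcal{X}(k_v)$.

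The technical heart is to prove that $\pi$ is a local homeomorphism, which is the statement that $\mathcal{X}(k_v)$ is analytically locally the scheme $U(k_v)$. Because $s$ and $t$ are \etale{}, they induce local homeomorphisms on $k_v$-points by \cite[Lemma 5.3]{conrad2020}. First $\pi$ is open: for open $W\subseteq U(k_v)$ one has $\pi^{-1}(\pi(W))=t(s^{-1}(W))$, which is open since $s$ is continuous and $t$ is an open map. Next, the identity section $U\to R$ is an open immersion, so around any $u\in U(k_v)$ I can choose a neighbourhood no two of whose points are $R(k_v)$-equivalent; on such a neighbourhood $\pi$ is an open continuous injection, hence a homeomorphism onto an open subset of $\mathcal{X}(k_v)$. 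This is the standard fact that the quotient map of a topological \etale{} equivalence relation is a local homeomorphism, and it is the step I expect to demand the most care.

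Granting that $\pi$ is a local homeomorphism, well-definedness and functoriality are formal. Given a morphism $f\colon\mathcal{X}\to\mathcal{Y}$, I would refine the atlas of $\mathcal{X}$ so that the composite $U\to\mathcal{X}\to\mathcal{Y}$ lifts through an atlas $W\to\mathcal{Y}$; the scheme-level map $U(k_v)\to W(k_v)$ is continuous and descends through the two quotient maps to a continuous map $\mathcal{X}(k_v)\to\mathcal{Y}(k_v)$. Independence of the atlas is the case of the identity morphism presented by two different atlases: passing to a common refinement and using that the comparison maps are local homeomorphisms shows the two quotient topologies agree. Equivalently, the topology is the finest one making $T(k_v)\to\mathcal{X}(k_v)$ continuous for every scheme $T\to\mathcal{X}$, and this universal property delivers functoriality directly.

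It remains to verify the three structural properties. For fibre products, the comparison map $(\mathcal{X}\times_{\mathcal{Z}}\mathcal{Y})(k_v)\to\mathcal{X}(k_v)\times_{\mathcal{Z}(k_v)}\mathcal{Y}(k_v)$ is a continuous bijection, since $\Hom(\Spec k_v,-)$ preserves fibre products; as the atlas quotient maps are local homeomorphisms the assertion that it is a homeomorphism is local on the source, where it reduces to the preservation of fibre products in the scheme case. For an open (resp. closed) immersion $\mathcal{Z}\hookrightarrow\mathcal{X}$, the pullback $U_{\mathcal{Z}}=U\times_{\mathcal{X}}\mathcal{Z}$ is an open (resp. closed) subscheme of $U$ with $U_{\mathcal{Z}}(k_v)=\pi^{-1}(\mathcal{Z}(k_v))$; this set is open (resp. closed) by the scheme case, so its saturated image $\mathcal{Z}(k_v)$ is open (resp. closed) in $\mathcal{X}(k_v)$, and since $\pi$ is an open quotient map the subspace topology on $\mathcal{Z}(k_v)$ coincides with its intrinsic one. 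For an \etale{} $f\colon\mathcal{X}\to\mathcal{Y}$, the lift $U\to W$ may be taken \etale{}, so $U(k_v)\to W(k_v)$ is a local homeomorphism, and cancelling the local homeomorphisms $\pi_{\mathcal{X}}$ and $\pi_{\mathcal{Y}}$ shows $\mathcal{X}(k_v)\to\mathcal{Y}(k_v)$ is one as well. The main obstacle throughout is securing the two inputs of the first two paragraphs, namely the surjectivity $U(k_v)\twoheadrightarrow\mathcal{X}(k_v)$ coming from decency and the local-homeomorphism property of $\pi$; once these hold, every remaining claim follows formally from the quotient topology and the scheme-level functor.
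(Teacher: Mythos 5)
Your overall strategy differs from the paper's, which simply cites \cite[\S 5]{conrad2020} and records the definition of the topology as the final topology with respect to \emph{all} \'{e}tale morphisms $U\to\mathcal{X}$ from finite type schemes; you instead build the topology as a quotient along a single atlas chosen (via decency of quasi-separated spaces) to be surjective on $k_v$-points. That choice is reasonable, and the two topologies agree once surjectivity on points is secured. The genuine problem is the step you yourself flag as the delicate one: the claim that every $u\in U(k_v)$ has a neighbourhood no two of whose points are $R(k_v)$-equivalent, deduced from the identity section $e\colon U\to R$ being an open immersion. Openness of $e(U(k_v))$ in $R(k_v)$ does not give this. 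To conclude that an equivalence between two distinct points near $u$ must be witnessed by a point of $R(k_v)$ near $e(u)$, you would need $(s,t)\colon R(k_v)\to U(k_v)\times U(k_v)$ to be a topological embedding onto a locally closed subset near $(u,u)$; for a merely quasi-separated $\mathcal{X}$ the monomorphism $R\to U\times U$ need not be an immersion, and on $k_v$-points it is injective and continuous but need not be an embedding.

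The intermediate claim is in fact false at this level of generality. Take $U=\mathbb{A}^1_{k_v}$ and $R=\Delta\sqcup\{(x,-x):x\neq 0\}$, an \'{e}tale equivalence relation whose quotient $\mathcal{X}$ is a quasi-separated (but not locally separated) algebraic space of finite type over $k_v$. Here $U(k_v)\to\mathcal{X}(k_v)$ is surjective and open, yet every $v$-adic neighbourhood of $0$ contains pairs $x\neq -x$ of distinct equivalent points, so no neighbourhood of $0$ injects into the quotient and $\pi$ is not a local homeomorphism at $0$ --- whereas your argument would certify that it is. The argument does go through if you assume $\mathcal{X}$ locally separated, so that $R\to U\times U$ is an immersion and hence $R(k_v)\hookrightarrow U(k_v)\times U(k_v)$ is an embedding with locally closed image; and since everything downstream in your write-up (fibred products, the subspace-topology statements for immersions, and especially the clause that \'{e}tale morphisms give local homeomorphisms, which you obtain by cancelling $\pi_{\mathcal{X}}$ and $\pi_{\mathcal{Y}}$) leans on the local-homeomorphism property, you need either to add that hypothesis or to rework the construction along the lines of \cite[\S 5]{conrad2020}. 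The same example puts pressure on the clause ``\'{e}tale morphisms are sent to local homeomorphisms'' as stated for all quasi-separated spaces, so the precise hypotheses of \emph{op.\ cit.} should be checked against the source rather than reproved from scratch.
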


\begin{proof}
    This is in \cite[\S 5]{conrad2020}. The essential idea is given an algebraic space $X$ and a subset $W\subseteq X(k_{v})$, we will say that $W$ 
    is open if and only if $f^{-1}(W)$ is open for every \'{e}tale morphism $f: U\to X$ where $U$ is a finite type scheme. We have previously 
    topologized $U(k)$. 
\end{proof}

\begin{proposition}
    Let $f:X\to Y$ be a smooth morphism between finite type $\sO_{v}$-algebraic spaces. 
    Let $R=k_{v}$ or $\sO_{v}$. 
     Then 
    $$
    f:X(R)\to Y(R)
    $$
    is an open map. Hence the subspace topology on $f(X(R))$ is quotient topology induced from the topology on $X(R)$. 
\end{proposition}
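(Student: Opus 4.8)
The plan is to deduce this from the scheme case \ref{p:smoothQuotient} by passing to \'etale scheme presentations of $X$ and $Y$ and using that, by the preceding theorem, an \'etale morphism induces a local homeomorphism---in particular a continuous open map---on $R$-points.

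Concretely, I would first fix an \'etale surjection $q\colon V\to Y$ with $V$ a finite type $\sO_v$-scheme. Base change gives $X_V:=X\times_Y V$, which is smooth over $V$ (a base change of $f$) and \'etale over $X$ (a base change of $q$); choosing an \'etale surjection $r\colon U\to X_V$ with $U$ a scheme and writing $p=\mathrm{pr}_X\circ r\colon U\to X$ and $g=\mathrm{pr}_V\circ r\colon U\to V$, one obtains a commuting square $f\circ p=q\circ g$ in which $p$ is an \'etale scheme atlas of $X$, $q$ an \'etale scheme atlas of $Y$, and $g\colon U\to V$ a \emph{smooth morphism of schemes}. Passing to $R$-points, the map $g\colon U(R)\to V(R)$ is open by \ref{p:smoothQuotient}, while $p$ and $q$ are local homeomorphisms; in particular $p$ is continuous and $q$ is open.

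The one genuine subtlety is that a single \'etale atlas need not be surjective on $R$-points---for instance $\Spec\sO_w\to\Spec\sO_v$ with $w\mid v$ unramified has empty set of $\sO_v$-points---so I cannot simply write $f(W)=q\bigl(g(p^{-1}(W))\bigr)$ for one $U$. Instead I would use that openness is local on the source together with the fact that every $R$-point of an algebraic space lifts through \emph{some} \'etale morphism from a scheme; thus the images of a family of \'etale scheme neighbourhoods $p_i\colon U_i\to X$ cover $X(R)$, and after replacing each $U_i$ by an \'etale presentation of $U_i\times_Y V$ we may assume each $f\circ p_i$ factors as $q\circ g_i$ with $g_i\colon U_i\to V$ smooth. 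For an open $W\subseteq X(R)$ we then get
$$
f(W)=\bigcup_i f\bigl(W\cap \im p_i\bigr)=\bigcup_i q\bigl(g_i(p_i^{-1}(W))\bigr),
$$
and each term is open because $p_i^{-1}(W)$ is open (continuity of $p_i$), $g_i(p_i^{-1}(W))$ is open (\ref{p:smoothQuotient}), and $q$ is open. Hence $f(W)$ is open. I expect the main work to be exactly this lifting/covering point for points of algebraic spaces (and checking the $R=\sO_v$ analogue of the preceding theorem, so that \'etale scheme atlases are local homeomorphisms on $\sO_v$-points); once that is in hand the conclusion, and the final quotient-topology assertion, follow formally, since a continuous open surjection onto its image is a topological quotient map.
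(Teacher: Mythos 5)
Your argument is correct, and it is essentially a fleshed-out version of what the paper only sketches, but the reduction you use is genuinely a bit different. The paper's proof is two lines: it asserts that the scheme proof of \ref{p:smoothQuotient} carries over, citing \cite[5.4]{conrad2020} for the \'etale case of algebraic spaces and saying the local factorisation of a smooth map as (\'etale) followed by (projection from $\AA^{n}\times Y$) can be ``constructed on an atlas'' --- i.e.\ the paper factors $f$ locally through $\AA^{n}\times Y$ with $Y$ still an algebraic space, which then requires knowing that this projection is open on $R$-points of algebraic spaces. You instead present \emph{both} $X$ and $Y$ by scheme atlases simultaneously, producing a smooth morphism of schemes $g\colon U\to V$ with $f\circ p=q\circ g$, and quote the scheme case wholesale; this buys you the advantage of never having to redo the smooth factorisation, or the product-topology argument, in the category of algebraic spaces, at the cost of the covering argument you describe. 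Both routes rest on the same two inputs (the scheme statement \ref{p:smoothQuotient} and the \'etale local-homeomorphism result of \cite[5.4]{conrad2020}), and the subtlety you isolate --- that a single \'etale atlas need not surject on $R$-points, so one must invoke that every $R$-point lifts through \emph{some} \'etale scheme neighbourhood --- is exactly the point the paper elides. The one item you rightly flag as needing verification is the $R=\sO_{v}$ case of that lifting statement (the paper's theorem topologizing algebraic spaces is stated only over $k_{v}$); it does hold, because an $\sO_{v}$-point can first be lifted over the closed point into an \'etale scheme neighbourhood and then extended by henselianness of $\sO_{v}$, but that step deserves to be said rather than assumed.
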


\begin{proof}
   We will implicitly make use of the fact that finite type algebraic spaces are quasi-separated. 
   The proof follows in the same way as for schemes, see Proposition \ref{p:smoothQuotient}. Note that for an \etale\ morphism between 
   algebraic spaces, the assertion is proved in \cite[5.4]{conrad2020}. 
   One can prove the
   required factorisation for smooth maps of algebraic spaces by constructing the factorisation on an atlas. 
\end{proof}

Given a finite type algebraic space over $k$, one can spread it out. We will briefly describe this construction in 
\S \ref{s:BM}.  
One can now construct a topological space structure on the adelic points of a separated algebraic space, mimicking the construction
for schemes. This is carried out in \cite[pg. 90]{conrad2020} which we refer the reader to for further details. Let us just recall a few key components of 
the construction. If $X$ is a separated algebraic space over $\sO_{k,S}$ of finite type then there is a natural bijection 
$$
X(k^{S}) \longrightarrow \prod_{v\in S} X(k_{v})\times \prod_{v\not\in S} X(\sO_{v}). 
$$
The left hand term of the above bijection has the product topology and hence $X(k^S)$ acquires a 
topology via this bijection. 
It follows that if $S\subseteq T$ then the natural map
$$
X(k^{S})\hookrightarrow X(k^{T})
$$
is an inclusion of an open set. Then the topology on
$$
X(\bA_{k})=\colim_{S\subseteq\Omega_k} X(k_{S})
$$
is the colimit topology. 

The smooth quotient property holds also for algebraic spaces. 

\begin{proposition}\label{p:smoothQuotientSpace}
    Let $f:X\to Y$ be a smooth morphism between separated, finite type $\sO_{k,S}$-algebraic spaces.  
    Suppose further that the induced map 
    $$
    f: X(\sO_{v})\to Y(\sO_{v})
    $$
    is surjective for almost all $v\not\in S$. 
     Then 
    $$
    f: X(k^{S})\to Y(k^{S})
    $$
    is an open morphism of topological spaces. Hence, the subspace topology on $f(X(k^{S}))$ is quotient topology induced from the topology on $Y(k^{S})$. 
\end{proposition}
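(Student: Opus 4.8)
The plan is to mimic the proof of the scheme case \ref{p:smoothQuotientAdele}, substituting for the pointwise open-map input the corresponding statement for algebraic spaces established just above (the smooth quotient property over $k_{v}$ and over $\sO_{v}$). First I would recall that, via the natural bijection
$$
X(k^{S}) \longrightarrow \prod_{v\in S} X(k_{v})\times \prod_{v\not\in S} X(\sO_{v}),
$$
the topology on $X(k^{S})$ is by definition the product topology on the right-hand side, and similarly for $Y(k^{S})$. Since openness of a map may be checked on a basis, it suffices to show that $f$ sends a basic open set to an open set. Such a basic open set of $X(k^{S})$ has the form
$$
\prod_{s\in S} U_{s} \times \prod_{t\in T} V_{t} \times \prod_{t\not\in T} X(\sO_{t}),
$$
where $T\subseteq\Omega_{k}$ is finite and disjoint from $S$, each $U_{s}\subseteq X(k_{s})$ is open, and each $V_{t}\subseteq X(\sO_{t})$ is open.

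Because $f$ is defined over $\sO_{k,S}$ it induces the factorwise maps $X(k_{s})\to Y(k_{s})$ and $X(\sO_{v})\to Y(\sO_{v})$, and the image of the box above is simply the box of the images,
$$
\prod_{s\in S} f(U_{s}) \times \prod_{t\in T} f(V_{t}) \times \prod_{t\not\in T} f(X(\sO_{t})).
$$
By the open-map property for algebraic spaces over $k_{s}$ and over $\sO_{t}$, each $f(U_{s})$ is open in $Y(k_{s})$ and each $f(V_{t})$ is open in $Y(\sO_{t})$. For the displayed box to be a basic open set of $Y(k^{S})$ it remains only to ensure that all but finitely many of the factors $f(X(\sO_{t}))$, with $t\not\in T$, equal the whole of $Y(\sO_{t})$. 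This is exactly where the hypothesis is used: since $f\colon X(\sO_{v})\to Y(\sO_{v})$ is surjective for almost all $v\not\in S$, we have $f(X(\sO_{t}))=Y(\sO_{t})$ for all but finitely many $t$. I would absorb the finitely many exceptional places into $T$ (taking $V_{t}=X(\sO_{t})$ there), at which places $f(V_{t})=f(X(\sO_{t}))$ is still open in $Y(\sO_{t})$ by the $\sO_{t}$-point open-map property. Thus the image is a basic open set of $Y(k^{S})$ and $f$ is open. The only real obstacle is this restricted-product bookkeeping: without the surjectivity hypothesis infinitely many factors $f(X(\sO_{t}))$ could be proper open subsets of $Y(\sO_{t})$, and the image would fail to be open.

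Finally, the ``hence'' clause follows from the standard fact that a continuous open surjection is a quotient map. The morphism $f$ is continuous and, by the above, open; viewing it as a surjection onto its image $f(X(k^{S}))$ endowed with the subspace topology from $Y(k^{S})$, it remains continuous and open, hence a quotient map. Therefore the subspace topology on $f(X(k^{S}))$ coincides with the quotient topology induced from $X(k^{S})$.
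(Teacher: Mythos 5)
Your proof is correct and follows essentially the same route as the paper, which simply notes that the result ``readily follows from the discussion above,'' i.e.\ from the basic-open-set computation already used for schemes in \ref{p:smoothQuotientAdele} together with the open-map property for smooth morphisms of algebraic spaces on $k_v$- and $\sO_v$-points. Your write-up correctly identifies the restricted-product bookkeeping as the only point where the surjectivity hypothesis enters, which is exactly the content the paper leaves implicit.
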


Note that we will briefly discuss how to spread out finite type algebraic spaces in \S \ref{s:BM} so that the last part of the proposition
makes sense. 

\begin{proof}
    This is carried out in \cite[\S 8]{christensen2020} but it readily follows from the discussion above. 
\end{proof}

 
\subsection{Adelic topologies on algebraic stacks}\label{ss:stackTop}

In this subsection we wish to define a topological space structure on $\fX(\bA_{k})$ for a certain class of algebraic stacks $\fX$ over $k$. 
We will discuss how to spread out finite type algebraic stacks in \ref{ss:spreadStack}. 

\begin{definition}
    Let $\fX$ be an algebraic stack of finite type over $k_{v}$. We say that $\fX$ is $k_{v}$-liftable, or just liftable when the context is clear if 
    there is a presentation $P\to \fX$ where $P$ is a finite type algebraic space over $k_{v}$ such that the induced map 
    $$
    P(k_{v})\to \fX(k_{v})
    $$
    is surjective. In this situation, we call $P$ a lifting presentation. 
\end{definition}

\begin{remark}
    Note that we can always find a presentation of $\fX$ in which every $k_v$-point lifts, see \cite[Theorem A.1]{neeraj} and 
    \cite[Definition 1.1]{neeraj}. 
\end{remark}

\begin{definition} \label{d:lifting} Let $S\subset \Omega_{k}$ be a finite set containing all infinite places. 
    Let $\fX$ be an algebraic stack over $\sO_{k,S}$. We say that $\fX$ is \emph{ $S$-liftable} 
    if the diagonal morphism $\fX\to \fX\times_{\sO_{k,S}} \fX$ is separated
    and  
    there is a single presentation $P\to \fX$ with $P$ a separated, finite type $\sO_{k,S}$-algebraic space such that 
    \begin{enumerate}
        \item every $s\in \fX(k^{T})$ lifts
    to $P$ where $T$ is an arbitrary finite subset of $\Omega_{k}$ containing $S$, 
    \item and the induced map $P(\sO_{v})\to \fX(\sO_{v})$ is surjective for all but finitely many $v\not\in S$. 
    \end{enumerate}
    
    In this situation, we will call $P\to \fX$ an $S$-lifting presentation. 

\end{definition}

Examples of $S$-liftable stacks abound. Indeed any quotient stack $X=[P/G]$ where $P$ is a separated, finite type $\sO_{k,S}$-scheme and $G$ is a linear algebraic group over 
$\sO_{k,S}$ is $S$-liftable. To see this, choose a faithful representation $G\hookrightarrow \gl_n$
and 
observe that $[P/G]=[P\times_{G}\gl_{n}/\gl_{n}]$, see Lemma \ref{l:little}. 
To see that the algebraic space $P\times_G \gl_n$ is separated, one considers the morphism 
$$
\lambda: P\times_G \gl_n \to \gl_n/G. 
$$
It suffices to show that this morphism is separated. But this condition is local in the fpqc-topology, 
see \cite[\href{https://stacks.math.columbia.edu/tag/0421}{Tag 0421}]{stacks-project}. 
One now verifies that $\lambda$ is separated by pulling back along the flat morphism $\gl_n\to \gl_n/G$ to obtain the cartesian square
\begin{center}
    \begin{tikzcd}
        P\times \gl_n \ar[r] \ar[d] & \gl_n \ar[d] \\ 
        P\times_G \gl_n \ar[r] & \gl_n/G. 
    \end{tikzcd}
\end{center}
The presentation $P\times_{G}\gl_{n}\to X$ satisfies the first condition of the proposition as
every $\gl_{n}$-torsor over $k^{S}$ is trivial,
see Proposition \ref{p:projTriv} and its proof. Further, 
as the ring $\sO_{v}$ is local, the second condition also holds. In a similar way, examples of $k_{v}$-liftable stacks exist.

\begin{remark}
    Let $\sO_v/m_v=\bar{k_v}$ be the residue field of $\sO_v$ and consider a stack $\fX$ over $\sO_v$ with a presentation $f:P\to \fX$. 
    Suppose that $\fX$ satisfies one of the following conditions. 
    \begin{enumerate}
        \item $\fX$ has affine stabilizers,
        \item $\fX$ has quasi-affine diagonal,
        \item $\fX$ is Deligne-Mumford
    \end{enumerate}
    then to check the second condition of the previous definition it suffices to show that $f$ is surjective on $\bar{k_v}$-points. 
    This follows from \cite[Corollary 1.5]{hall} and formal smoothness. 
\end{remark}

The following elementary lemma gives another method to produce such presentations. 
\begin{lemma}
    Let $\fX\to \fY$ be a separated, representable morphism of algebraic stacks over $\sO_{k,S}$. If $P\to \fY$ is an $S$-liftable presentation then so is 
    $\fX\times_{\fY} P \to \fX$. 
\end{lemma}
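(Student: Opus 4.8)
The plan is to put $Q := \fX \times_{\fY} P$ and check directly that the projection $q \colon Q \to \fX$ satisfies the definition of an $S$-lifting presentation, by transporting the lifting data for $P \to \fY$ across the fibre square. First I would record the formal properties. Since $\fX \to \fY$ is representable and $P$ is an algebraic space, the fibre product $Q$ is again an algebraic space; it is of finite type over $\sO_{k,S}$ because $P$ is and because the morphism $\fX \to \fY$ between finite type stacks is of finite type (the base $\sO_{k,S}$ being Noetherian), so the same holds after the base change $q$. Moreover $q$ is the base change of the smooth surjection $P \to \fY$ along $\fX \to \fY$, and both smoothness and surjectivity are stable under base change; hence $q$ is a smooth surjective presentation.

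Next I would verify the first condition. Write $f \colon \fX \to \fY$ and $g \colon P \to \fY$, fix a finite set $T \supseteq S$, and take $s \in \fX(k^T)$. Pushing forward gives $f \circ s \in \fY(k^T)$, which by the $S$-liftability of $P \to \fY$ admits a lift $\tilde s \in P(k^T)$ together with an isomorphism $\alpha \colon g(\tilde s) \xrightarrow{\sim} f(s)$ in $\fY(k^T)$. By the universal property of the $2$-fibre product, the datum $(s, \tilde s, \alpha)$ is a $k^T$-point of $Q$ with $q$-image $s$. Thus every point of $\fX(k^T)$ lifts to $Q$.

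The second condition is verified identically with $k^T$ replaced by $\sO_v$. Let $\Sigma$ be the finite set of places $v \notin S$ at which $P(\sO_v) \to \fY(\sO_v)$ fails to be surjective. For $v \notin S \cup \Sigma$ and any $x \in \fX(\sO_v)$, the image $f \circ x$ lifts to some $p \in P(\sO_v)$, and the resulting triple gives a point of $Q(\sO_v)$ over $x$; hence $Q(\sO_v) \to \fX(\sO_v)$ is surjective off the cofinite set of places avoiding $S \cup \Sigma$.

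The only point requiring care is the fibre-product bookkeeping in the two lifting steps: because we work with the \emph{set} of isomorphism classes of points rather than the full groupoid, one must carry the $2$-isomorphism $\alpha$ along explicitly to be sure that a lift of the image in $\fY$ really assembles into a point of $Q$ mapping to the given point of $\fX$. Everything else is formal from representability together with the stability of finite type, smoothness and surjectivity under base change, which is no doubt why the author labels this an elementary lemma.
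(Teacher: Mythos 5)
Your proposal is correct and is precisely the ``elementary from definitions'' argument the paper leaves implicit: representability makes $\fX\times_{\fY}P$ an algebraic space, base change preserves finite type, smoothness and surjectivity, and the universal property of the $2$-fibre product transports lifts of $k^T$- and $\sO_v$-points from $P\to\fY$ to $\fX\times_{\fY}P\to\fX$. No further comment is needed.
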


\begin{proof}
    Elementary from definitions. 
\end{proof}

If $\fX$ is a liftable algebraic stack over $k_{v}$ then we topologize $\fX(k_{v})$ in the following way. First fix a lifting presentation $P\to \fX$ so that 
$P(k_{v})\to \fX(k_{v})$ surjects. Then equip $\fX(k_{v})$ with the quotient topology. Note that if $X\to Y$ is a morphism of schemes of finite type which is 
smooth and surjective on $k_{v}$-points then the topology on $Y(k_{v})$ is necessarily the quotient topology of that on $X(k_{v})$ by Proposition \ref{p:smoothQuotient}. 

Similarly, if $\fX$ is an $S$-liftable stack we equip $\fX(k^{S})$ with the quotient topology via the morphism
$$
P(k^{S})\to \fX(k^{S}). 
$$

\begin{proposition}
    The above definitions do not depend on the choice of lifting presentation.They are functorial for separated, representable morphisms. 
\end{proposition}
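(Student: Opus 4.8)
The plan is to handle both constructions at once, writing $R$ for either $k_v$ or $k^S$, and to reduce the statement to two elementary facts: a continuous open surjection is a quotient map, and a composition of quotient maps is a quotient map.

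For independence, I would take two lifting presentations $p_1:P_1(R)\to\fX(R)$ and $p_2:P_2(R)\to\fX(R)$ and form the fibre product $Q=P_1\times_{\fX}P_2$. Since each $P_i\to\fX$ is representable, $Q$ is a finite type algebraic space and the two projections $\pi_i:Q\to P_i$ are smooth, being base changes of the smooth presentations. The first thing to verify is that each $\pi_i$ is surjective on $R$-points: given $x\in P_1(R)$ with class $\xi\in\fX(R)$, surjectivity of $p_2$ produces $y\in P_2(R)$ with the same class, and the object $(x,y,\alpha)$ of the groupoid $Q(R)$ maps to $x$ under $\pi_1$. Next, the smooth-quotient results --- \ref{p:smoothQuotient} and its algebraic-space analogue over $k_v$, respectively \ref{p:smoothQuotientSpace} over $k^S$ --- show that each $\pi_i$ is an open map; in the $S$-case applying \ref{p:smoothQuotientSpace} requires knowing $Q(\sO_v)\to P_i(\sO_v)$ is surjective for almost all $v\notin S$, which I would read off from condition (2) of $S$-liftability for the other presentation via the same fibre-product-of-points bookkeeping. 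Being continuous, open and surjective, each $\pi_i$ is then a quotient map, so the given topology on $P_i(R)$ is the quotient topology from $Q(R)$. Writing $q:=p_1\circ\pi_1=p_2\circ\pi_2$ for the common composite (the two agree as maps of sets because the defining square $2$-commutes), I would conclude that $q$ is a quotient map and that the quotient topology it defines on $\fX(R)$ equals both $\tau_{P_1}$ and $\tau_{P_2}$; hence the two coincide.

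For functoriality, let $f:\fX\to\fY$ be representable and fix a lifting presentation $P\to\fY$. By the preceding lemma (and its evident $k_v$-analogue) the pullback $Q:=\fX\times_{\fY}P\to\fX$ is again a lifting presentation, and $Q$ is an algebraic space because $f$ is representable. On $R$-points the defining square becomes a commutative diagram in which $q_X:Q(R)\to\fX(R)$ is a quotient map, $p_Y:P(R)\to\fY(R)$ is the quotient map for $\fY$, and $g:Q(R)\to P(R)$ is continuous as it comes from a morphism of algebraic spaces. Since $f\circ q_X=p_Y\circ g$ is continuous and $q_X$ is a quotient map by the first part, the universal property of the quotient topology forces $f$ to be continuous; compatibility with composition is automatic on underlying sets.

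The step I expect to be the genuine obstacle is establishing surjectivity of the projections $\pi_i$ on $R$-points, and in the $S$-liftable case on $\sO_v$-points for almost all $v$. This is precisely where the liftability hypotheses are used, and it must be phrased carefully with $\fX(R)$ interpreted as isomorphism classes, so that the $2$-fibre product of points performs exactly the lifting that surjectivity of $p_2$ (resp. the $\sO_v$-surjectivity in condition (2)) provides. Once openness and surjectivity are secured, everything else is formal point-set topology.
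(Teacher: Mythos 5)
Your proposal is correct and follows essentially the same route as the paper: form the fibre product $P_1\times_{\fX}P_2$, check it is again a lifting presentation, use \ref{p:smoothQuotient}/\ref{p:smoothQuotientSpace} to see the projections are quotient maps, and conclude by composing quotients; functoriality then follows from the pullback lemma and independence of presentation. The extra details you supply (surjectivity of the projections on $R$-points and on $\sO_v$-points for almost all $v$) are exactly the points the paper leaves implicit, and your bookkeeping for them is sound.
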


\begin{proof}
    The proof for $S$-liftable stacks will be given. The other case for liftable stacks over $k_v$ is similar, and easier. 

    Suppose that we have two lifting presentations $P_i\to \fX$. 
    In view of the cartesian diagram 
    \begin{center}
        \begin{tikzcd}
            P_1\times_{\fX} P_2 \ar[r] \ar[d] & P_1\times_{\sO_{k,S}} P_2 \ar[d] \\ 
            \fX \ar[r] & \fX\times_{\sO_{k,S}} \fX 
        \end{tikzcd}
    \end{center}
    we see that the algebraic space $P_1\times_{\fX} P_2$ is separated over $\sO_{k,S}$ as we have assumed that the diagonal map 
    $\fX\to  \fX \times_{\sO_{k,S}} \fX $ to be separated. 
    
    One now readily verifies the other conditions in (\ref{d:lifting}) so that 
    $P_{1}\times_{\fX}P_{2} \to \fX$ is also a lifting presentation. 
    The morphism 
    $P_{1}\times_{\fX}P_{2}\to P_{i}$ is smooth and surjective and satisfies the hypothesis of Proposition \ref{p:smoothQuotientSpace}. Hence the continuous map 
    $$
    (P_{1}\times_{\fX}P_{2})(k^S)\to P_{i}(k^S)
    $$
    is a quotient map of topological spaces. The result follows by composing quotients. 

    Consider a separated representable morphism $\fX \to \fY$ of liftable stacks. Then given a lifting presentation $Y\to \fY$ the fibered product $Y\times_{\fY}\fX\to \fX$ is 
    a lifting presentation. The result follows from the independence of the topology from the presentation. 
\end{proof}

\begin{lemma}
    \label{l:isOpen}
    Let $\fX$ be a  $S$-liftable stack over $\sO_{k,S}$. Suppose that $P\to \fX$ is a lifting presentation. 
    Then for every finite subset $T\subset \Omega_{k}$ containing $S$, the stack is $\fX\otimes_{\sO_{k,S}} \sO_{k,T}$ is $T$-liftable. 
    We also have that the natural map 
    $$
    \fX(k^{S})\to  \fX(k^{T})
    $$
    is an open morphism. 
\end{lemma}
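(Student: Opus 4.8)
The plan is to take the base change $P':=P\otimes_{\sO_{k,S}}\sO_{k,T}$ as a candidate $T$-lifting presentation of $\fX':=\fX\otimes_{\sO_{k,S}}\sO_{k,T}$, and then to reduce the openness assertion to the statement that a lifting presentation induces an \emph{open} quotient map on adelic points. First I would check $T$-liftability of $\fX'$ via $P'\to\fX'$. For every finite $T'\supseteq T$ the ring $k^{T'}$ is canonically an $\sO_{k,T}$-algebra, and likewise $\sO_{v}$ for $v\notin T$; hence base change gives identifications $\fX'(k^{T'})=\fX(k^{T'})$, $P'(k^{T'})=P(k^{T'})$, and $\fX'(\sO_{v})=\fX(\sO_{v})$, $P'(\sO_{v})=P(\sO_{v})$ for $v\notin T$. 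Under these identifications condition (1) for $P'\to\fX'$ is precisely condition (1) for $P\to\fX$ applied to the set $T'\supseteq T\supseteq S$, and condition (2) for $P'\to\fX'$ follows from condition (2) for $P\to\fX$, since passing from $S$ to $T$ discards only finitely many places and so only shrinks the range of the asserted surjectivity by a finite amount. Thus $\fX'$ is $T$-liftable, and in particular $\fX(k^{T})$ carries the quotient topology induced by $q_{T}\colon P(k^{T})\to\fX(k^{T})$.

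For the openness statement I would consider the commutative square
\[
\begin{array}{ccc}
P(k^{S}) & \xrightarrow{i_{P}} & P(k^{T})\\
\downarrow & & \downarrow\\
\fX(k^{S}) & \xrightarrow{i} & \fX(k^{T})
\end{array}
\]
whose horizontal maps $i_{P},i$ are induced by $\spec k^{T}\to\spec k^{S}$ and whose vertical maps are the quotient maps $q_{S},q_{T}$. Because $P$ is separated, $i_{P}$ is an open inclusion (the algebraic-space analogue of \ref{t:adeleTop}, which is where separatedness of $P$ enters). Given an open $U\subseteq\fX(k^{S})$, put $\tilde U=q_{S}^{-1}(U)$; it is open in $P(k^{S})$, hence, via $i_{P}$, open in $P(k^{T})$. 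Since $q_{S}$ is surjective we have $q_{S}(\tilde U)=U$, so commutativity of the square yields $i(U)=q_{T}(i_{P}(\tilde U))$. It therefore suffices to prove that $q_{T}$ is an open map: then $i(U)=q_{T}(\tilde U)$ is open for every open $U$, i.e.\ $i$ is open. Note that no injectivity of $i$ is needed, only that images of open sets are open.

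The heart of the matter, and the step I expect to be the main obstacle, is the openness of $q_{T}$. Let $R=P\times_{\fX}P$, an algebraic space because the presentation $P\to\fX$ is representable, with the two projections $s,t\colon R\to P$. Both are smooth and surjective, being base changes of the smooth surjective map $P\to\fX$, and the diagonal $\Delta\colon P\to R$ is a common section, so $s(\sO_{v})$ and $t(\sO_{v})$ are surjective for \emph{every} $v$; this is precisely the trick that makes the hypothesis of \ref{p:smoothQuotientSpace} hold without any point-counting. Consequently \ref{p:smoothQuotientSpace}, applied over $\sO_{k,T}$, shows that $t\colon R(k^{T})\to P(k^{T})$ (and likewise $s$) is an open map. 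Now for any open $V\subseteq P(k^{T})$ the $q_{T}$-saturation of $V$ is $q_{T}^{-1}(q_{T}(V))=t(s^{-1}(V))$: two $k^{T}$-points of $P$ have equal image in the set $\fX(k^{T})$ exactly when they are joined by a point of $R(k^{T})$, since $R(k^{T})$ parametrizes the isomorphisms in $\fX$ between the images of pairs of $k^{T}$-points of $P$. As $s$ is continuous and $t$ is open, $t(s^{-1}(V))$ is open, whence $q_{T}(V)$ is open. This proves $q_{T}$ is open and finishes the argument; the one identity to handle with care is $q_{T}^{-1}(q_{T}(V))=t(s^{-1}(V))$, which is the bookkeeping of the groupoid of $k^{T}$-points.
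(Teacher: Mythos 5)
Your proof is correct and follows essentially the same route as the paper: $T$-liftability by base change of the given presentation, and openness of $\fX(k^{S})\to\fX(k^{T})$ deduced from the open inclusion $P(k^{S})\subseteq P(k^{T})$ together with the quotient description of the topology on $\fX(k^{T})$. You additionally make explicit the step the paper's one-line "follows from the quotient topology" leaves implicit --- namely that $q_{T}\colon P(k^{T})\to\fX(k^{T})$ is an \emph{open} map, proved via the groupoid $P\times_{\fX}P$ and \ref{p:smoothQuotientSpace} --- and this step is genuinely needed, since openness of $q_{S}^{-1}(U)$ inside $P(k^{T})$ does not by itself make its $q_{T}$-saturation open.
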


\begin{proof}
    The first assertion is clear. 
    The analogous result holds for algebraic spaces, see \cite[\S 5]{conrad2020} so that 
    $$
    P(k^{S})\subseteq P(k^{T})
    $$
    is open. The result now follows from the fact that the topology on $\fX(k^{T})$ is the quotient topology inherited from $P(k^{T})$.  
\end{proof}

Given a  $S$-liftable stack $\fX$ we define a topology on its adelic points by equipping 
$$
\fX(\bA_{k}) = \colim_{T\subseteq\Omega_k} \fX(k^{T})
$$
with the colimit topology in topological spaces. The colimit is over all finite subsets of $\Omega_{k}$ containing $S$.

\section{Cohomology of algebraic stacks and their Brauer groups}

In this section we will review some results on the cohomology of algebraic stacks. We will define the Brauer group  
of an algebraic stack to be its cohomological Brauer group. The technical results of this section will be used in the next 
section to construct the Brauer-Manin pairing for algebraic stacks. 
Where possible, we will keep notation and conventions consistent with those in \cite{stacks-project}. 

\subsection{The big  \'{e}tale site}   

If $\fX$ is a an algebraic stack then $(\sch/\fX)_{\et}$
will denote the big \'{e}tale site on $\fX$. This is the category with objects morphisms
$X\to \fX$ where $X$ is a scheme and morphisms are commuting triangles. The covers are \'{e}tale covers of schemes. 
This site is functorial for morphisms of algebraic stacks, see \cite[\href{https://stacks.math.columbia.edu/tag/06NW}{Tag 06NW}]{stacks-project},
unlike other commonly used sites such as the lisse-\'{e}tale site, see \cite[3.3]{olsson2007}, \cite[4.42]{behrend2003} or 
\cite[\href{https://stacks.math.columbia.edu/tag/07BF}{Tag 07BF}]{stacks-project}.

For every scheme $X$ and every morhpism $f:X\to \fX$ there is an inclusion functor $(\sch/X)_{\et}\to (\sch/\fX)_{\et}$ obtained by 
composing with $f$. 

If $F$ is a sheaf on $(\sch/\fX)_{\et}$ then for every morphism $f:X\to \fX$ we obtain a restricted sheaf on $(\sch/X)_{\et}$ that we denote by 
$F|_{X}$ when the morphism $f$ is clear from the context.

\begin{proposition}\label{c:stacksBaseChange}
    Let $f:\fX\to \fY$ be a representable morphism of algebraic stacks. Let $Y\to \fY$ be a morphism from a scheme to $\fY$. Consider 
    the cartesian diagrm 
    \begin{center}
        \begin{tikzcd}
            \fX_Y \ar[r] \ar[d,"f_Y"] & \fX \ar[d,"f"] \\
            Y \ar[r]  & \fY.  
        \end{tikzcd}
    \end{center}
    Then $R^{i}f_* F|_Y=(R^{i}f_{Y})(F|_Y)$.  
\end{proposition}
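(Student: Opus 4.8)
The plan is to reduce the assertion to the case $i=0$, where it is tautological, and then propagate it through an injective resolution using exactness of the restriction functors. Throughout I read the right-hand side as $R^{i}f_{Y*}(F|_{\fX_Y})$, where $F|_{\fX_Y}$ is the restriction of $F$ along the projection $\fX_Y\to\fX$; since $f$ is representable, $f_Y$ is representable and $\fX_Y$ is an algebraic space over $Y$, so all the sites, restriction functors, and fibre products appearing below are of the kind set up earlier in this section, and fibre products such as $T\times_{\fY}\fX$ are algebraic spaces on which the big-site sheaf $F$ can be evaluated.

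First I would record the two formal inputs I need, both consequences of the functoriality of the big \'{e}tale site (Tag 06NW). The restriction $(-)|_Y\colon \mathrm{Sh}((\sch/\fY)_{\et})\to\mathrm{Sh}((\sch/Y)_{\et})$ is a localization: the slice $(\sch/\fY)_{\et}/Y$ is canonically $(\sch/Y)_{\et}$, so $(-)|_Y=j^{-1}$ for the associated morphism of topoi and therefore is exact with an exact left adjoint $j_!$; in particular it preserves injectives. The same holds for $(-)|_{\fX_Y}$ along $\fX_Y\to\fX$. These exactness and injective-preservation statements are the only homological facts required.

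Next I would check the $i=0$ identity $(f_*F)|_Y=f_{Y*}(F|_{\fX_Y})$ by direct evaluation. On the big site the pushforward is $(f_*F)(T\to\fY)=F(T\times_{\fY}\fX)$, so for a scheme $U\to Y$ the canonical isomorphism $U\times_{\fY}\fX\cong U\times_{Y}\fX_Y$ coming from the cartesian square gives
\[
(f_*F)|_Y(U)=F(U\times_{\fY}\fX)=F(U\times_{Y}\fX_Y)=f_{Y*}(F|_{\fX_Y})(U),
\]
and these identifications are natural in $U$ and in $F$, yielding the claimed equality of sheaves on $(\sch/Y)_{\et}$. To upgrade to arbitrary $i$, I would choose an injective resolution $F\to I^{\bullet}$ in $\mathrm{Sh}((\sch/\fX)_{\et})$. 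Since $(-)|_Y$ is exact, $(R^{i}f_*F)|_Y=H^{i}\bigl((f_*I^{\bullet})|_Y\bigr)$, and by the degreewise $i=0$ identity this equals $H^{i}\bigl(f_{Y*}(I^{\bullet}|_{\fX_Y})\bigr)$. Because $(-)|_{\fX_Y}$ is exact it sends $F\to I^{\bullet}$ to a resolution of $F|_{\fX_Y}$, and because it preserves injectives each $I^{j}|_{\fX_Y}$ is injective; hence $I^{\bullet}|_{\fX_Y}$ is an injective resolution of $F|_{\fX_Y}$ and the last group is exactly $R^{i}f_{Y*}(F|_{\fX_Y})$.

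The main obstacle, and the only step that genuinely uses representability of $f$, is justifying the injective-preservation (equivalently $f_{Y*}$-acyclicity) for the restriction along $\fX_Y\to\fX$ when $\fX_Y$ is merely an algebraic space rather than an object of the scheme site: one must ensure that $F$ may be evaluated on the algebraic-space fibre products and that the left adjoint $j_!$ exists and is exact in this slightly more general setting. I expect this to follow cleanly from the big-\'{e}tale functoriality recorded above (reducing, if needed, to a scheme atlas of $\fX_Y$), which is precisely the structural feature that distinguishes the big \'{e}tale site from the lisse-\'{e}tale site and makes the base change essentially tautological.
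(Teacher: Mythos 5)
Your proof is correct; the paper itself gives no argument here beyond the citation \cite[\href{https://stacks.math.columbia.edu/tag/075H}{Tag 075H}]{stacks-project}, and your argument --- the tautological degree-zero identity $(f_*F)|_Y(U)=F(U\times_{\fY}\fX)=F(U\times_Y\fX_Y)$ propagated through an injective resolution using that restriction to a slice of the big \'{e}tale site is exact and preserves injectives --- is exactly the standard proof of that result. You also correctly repaired the typo in the statement (the right-hand side should be $R^{i}f_{Y*}(F|_{\fX_Y})$, not $R^{i}f_{Y*}(F|_{Y})$) and flagged the one genuine point of care, namely that representability of $f$ makes $U\times_{\fY}\fX$ an algebraic space rather than a scheme, which is handled by the canonical extension of a big-\'{e}tale sheaf on schemes to algebraic spaces (equivalently, by localizing the topos at the sheaf represented by $\fX_Y$, for which $j_!$ is still exact).
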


\begin{proof}
   This is \cite[\href{https://stacks.math.columbia.edu/tag/075H}{Tag 075H}]{stacks-project}. 
\end{proof}

 
\subsection{The simplicial approach}
The reader is referred to \cite{stacks-project} for a discussion of simplicial objects, spaces, sheaves and their cohomology. 
The goal of this section is to explain how a well-known result about spreading about cohomology classes for schemes, see Theorem \ref{t:cohSpread} and Corollary \ref{c:cohGeneric},
can be generalised to algebraic stacks, see Theorem \ref{t:cohSpreadStack}. 

We will make use of the following result from SGA 4. 

\begin{theorem} \label{t:cohSpread}
Let $I$ be a filtered ordered set thought of as a category. Consider a 
$$
X:I^{op}\to \sch
$$
filtered inverse system of schemes. We write $X(i)=X_{i}$. Assume that $X_{i}$ are quasi-compact and quasi-separated and that all morphisms in the system 
are affine. 
\begin{enumerate}
    \item Then the limit $X=\varprojlim X_{i}$ exists as a scheme. 
    \item Fix $0\in I$ and suppose that $F_{0}$ is a sheaf on the \'{e}tale site of $X_{0}$. If $i\ge 0$ we define $F_{i}=u_{i}^{-1}F_{0}$ where $u_{i}:X_{i}\to X_{0}$
    is the morphism defining the inverse system. Also set $F_{\infty}= u^{-1}F_{0}$ where $u:X\to X_{0}$ is the universal morphism. Then 
    $$
    \varinjlim H^{n}(X_{i},F_{i})\cong H^{n}(X,F_{\infty}). 
    $$
\end{enumerate} 
\end{theorem}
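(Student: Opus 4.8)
The plan is to treat the two parts separately, the first being a routine gluing construction and the second the substantive spreading-out argument. For part (1) I would exploit that every transition morphism is affine: fixing the index $0$, write each $u_i\colon X_i\to X_0$ as $X_i=\underline{\Spec}_{X_0}(\sA_i)$ for a quasi-coherent sheaf of $\sO_{X_0}$-algebras $\sA_i$. The assignment $i\mapsto \sA_i$ is a filtered direct system of quasi-coherent algebras, so $\sA:=\varinjlim_i \sA_i$ is again a quasi-coherent $\sO_{X_0}$-algebra, and I would set $X:=\underline{\Spec}_{X_0}(\sA)$. Testing the universal property on affine opens of $X_0$ reduces it to the ring-theoretic statement $\Spec(\varinjlim A_i)=\varprojlim\Spec(A_i)$, which is immediate. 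Quasi-compactness and quasi-separatedness of $X$ are then inherited from $X_0$ through the affine structure morphism.

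For part (2), the heart of the matter, I would reduce everything to the assertion that the small \'{e}tale topos of $X$ is the filtered $2$-limit of the small \'{e}tale topoi of the $X_i$. The key geometric inputs are the standard limit results of EGA IV \S 8: (a) every \'{e}tale morphism $U\to X$ of finite presentation is a base change $U_i\times_{X_i}X$ of an \'{e}tale $X_i$-scheme $U_i$ for some index $i$; (b) for objects $U_i,V_i$ over $X_i$ one has $\Hom_X(U,V)=\varinjlim_{j\ge i}\Hom_{X_j}(U_j,V_j)$; and (c) an \'{e}tale covering of $X$ descends to an \'{e}tale covering at some finite level. Granting these, the comparison map $\varinjlim_i \HH^n(X_i,F_i)\to \HH^n(X,F_\infty)$ is defined by pullback along the projections $X\to X_i$, using that $u^{-1}F_i=F_\infty$ since $u$ factors through $u_i$.

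I would then verify this map is an isomorphism, either by citing the general theorem that cohomology commutes with cofiltered limits of coherent topoi (the \'{e}tale topos of a qcqs scheme being coherent and the transition morphisms being coherent), or more by hand as follows. For surjectivity, represent a class in $\HH^n(X,F_\infty)$ on an \'{e}tale hypercover of $X$; by (a)--(c) this hypercover together with the cocycle data descends to a hypercover over some $X_i$, exhibiting the class as coming from $\HH^n(X_i,F_i)$. For injectivity, a class killed in the limit is killed by a coboundary defined over $X$, and that witnessing datum again descends to some $X_j$ with $j\ge i$, so the class already dies at level $j$ and hence in the colimit.

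The main obstacle is the combination of (b) and (c): establishing that not merely the \'{e}tale schemes but also the morphisms among them and the covering families descend compatibly to a finite level, so that entire hypercovers, and the cocycles living on them, can be spread out simultaneously. This is precisely where the finite-presentation hypotheses and the quasi-compact, quasi-separated assumptions are essential, and it is the only step requiring genuine geometric work rather than formal topos theory; once the topos-limit statement is secured, the cohomological conclusion is formal.
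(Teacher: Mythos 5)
Your outline is correct, but note that the paper does not prove this statement at all: it simply cites SGA~4, Expos\'{e} V(II), 5.7--5.8, and what you have written is a faithful reconstruction of exactly that standard argument (relative $\Spec$ of the colimit algebra for part (1); descent of \'{e}tale schemes, morphisms, coverings and hypercovers to a finite level, or equivalently the theorem that cohomology of coherent topoi commutes with cofiltered limits, for part (2)). The only points worth tightening are that the comparison map should be written via the projections $p_i\colon X\to X_i$ (so $p_i^{-1}F_i=F_\infty$, not $u^{-1}F_i$), and that descending a covering in (c) uses the qcqs hypothesis to first refine it to a finite affine cover before invoking the finite-presentation limit results.
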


\begin{proof}
See \cite[Expose V, 5.7,5.8]{SGA4}. 
\end{proof}

We will often apply this with $F_{0}=\Gm$. In this case we have $F_{i}=u_{i}^{-1}\Gm =\Gm$ as we are using the big \etale\ site, cf. 
\cite[\href{https://stacks.math.columbia.edu/tag/04DI}{Tag 04DI}]{stacks-project}.

\begin{corollary}\label{c:cohGeneric}
    Let $R$ be an integral domain with field of fractions $K$. Let $\tilde{X}$ be a scheme of finite type over $R$ and $F$ a sheaf on $\tilde{X}$. 
    Write $X=\tilde{X}_{K}$ and for each $a\in R$ set $X_{a}=\tilde{X}\otimes_{R}R_{a}$. 
    If $\sigma \in H^{n}(X,F|_{X})$ then there is an $a\in R$ and a $\tau\in H^{n}(X_{a},F)$ so that $\tau|_{K}=\sigma$. 
\end{corollary}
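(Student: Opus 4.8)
The plan is to realise $X=\tX_K$ as a filtered inverse limit of the schemes $X_a$ and to apply \ref{t:cohSpread} directly. Let $I=R\setminus\{0\}$, ordered by divisibility, so that $a\le b$ exactly when $b=ac$ for some $c\in R$. This order is filtered, since any two elements $a,b$ both divide $ab$. Whenever $a\mid b$, the element $a$ becomes a unit in $R_b=R[1/b]$ (a factor of a unit in a commutative ring is again a unit), so there is a canonical localisation map $R_a\to R_b$; these make $a\mapsto R_a$ into a filtered system of $R$-algebras. Because $R$ is a domain, every nonzero element is eventually inverted in the colimit, whence $\varinjlim_{a\in I} R_a = K$.

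Dually, I would set $X_0=\tX$ (corresponding to $a=1$) and form the inverse system $X\colon I^{op}\to\sch$, $a\mapsto X_a=\tX\otimes_R R_a$, with transition morphisms $X_b\to X_a$ for $a\mid b$. Each such transition map is the base change of the affine morphism $\Spec R_b\to\Spec R_a$ and is therefore affine, and each $X_a$ is of finite type over $R_a$, hence quasi-compact and quasi-separated. The hypotheses of \ref{t:cohSpread} are thus met, and since base change commutes with inverse limits and $\Spec$ converts the ring colimit $\varinjlim_a R_a=K$ into a limit of affine schemes, the limit of the system is
$$
\varprojlim_{a\in I} X_a \;=\; \tX\otimes_R \Big(\varinjlim_{a} R_a\Big)\;=\;\tX\otimes_R K\;=\;X.
$$

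Now I would apply \ref{t:cohSpread}(2) with base sheaf $F_0=F$ on $X_0=\tX$. Writing $F_a$ for its pullback to $X_a$ and noting that the pullback to the limit is $F|_X$, the theorem supplies an isomorphism
$$
\varinjlim_{a\in I} H^{n}(X_a,F_a)\;\xrightarrow{\ \sim\ }\;H^{n}(X,F|_X).
$$
Given $\sigma\in H^{n}(X,F|_X)$, the defining property of a filtered colimit produces an index $a\in I$ and a class $\tau\in H^{n}(X_a,F_a)$ mapping to $\sigma$. The structural map into the colimit is precisely pullback along $X\to X_a$ induced by the inclusion $R_a\to K$, that is, restriction to the generic fibre, so $\tau|_K=\sigma$ and the corollary follows.

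The argument is essentially formal once the inverse system is in place; the only points requiring care are bookkeeping, namely checking that the divisibility order is filtered and yields $\varinjlim_a R_a=K$, that the transition maps are affine, and that each $X_a$ is quasi-compact and quasi-separated so that \ref{t:cohSpread} genuinely applies. I do not anticipate a real obstacle beyond matching this setup to the hypotheses of the cited \textsc{SGA 4} statement.
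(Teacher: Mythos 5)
Your argument is correct and is exactly the paper's proof: the paper simply observes that $X_K=\varprojlim_a X_a$ and invokes \ref{t:cohSpread}, and your write-up supplies the routine verifications (filteredness of the divisibility order, affineness of the transition maps, identification of the colimit of the $R_a$ with $K$) that the paper leaves implicit.
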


\begin{proof}
    We have $X_{K}= \varprojlim_{a\in R} X_{a}$. So the result follows from the theorem. For an important special case see also \cite[6.4.3]{poonen2017}. 
\end{proof}

If $X_{\bullet}$ is a simplicial scheme (or algebraic space) and $F$ a simplicial sheaf for the \'{e}tale site on $X_{\bullet}$ then we have a first quadrant spectral sequence
with 
$$
E_{1}^{pq}(X_{\bullet}, F)=H^{q}(X_{p},F_{p}) \implies H^{p+q}(X_{\bullet}, F),
$$
see \cite[\href{https://stacks.math.columbia.edu/tag/0D76}{Tag 0D76}]{stacks-project}. 
The spectral sequence is functorial in that given a morphism $f:Y_{\bullet}\to X_{\bullet}$ of simplicial schemes (or algebraic spaces) we obtain a morphism of spectral sequences 
$$
f^{*}:E_{1}^{pq}(X_{\bullet}, F)\to E_{1}^{pq}(Y_{\bullet},f^{-1}F). 
$$

In this situation we will say that a class $\sigma\in E_{1}^{pq}(Y_{\bullet}, f^{-1}F)$
lifts to $X_{\bullet}$ if there is a $\tau\in E_{1}^{pq}(X_{\bullet},F)$ with $f^{*}(\tau)=\sigma$. Alternatively, we say that the class $\tau$ lifts $\sigma$. 

\begin{lemma}\label{l:ssSpread}
Let $R$ be an integral domain with field of fractions $K$. Let $\tX_{\bullet}$ be a simplicial scheme over $R$ so that each $\tX_{n}$ is of finite type over 
 $R$. Let $F$ be a sheaf on $\tX_{\bullet}$. For $a\in R $ we write $\tX_{\bullet,a}$ (resp. $\tX_{\bullet,K}$) for $\tX_{\bullet}\otimes_{R}R_{a}$ 
 (resp. $\tX_{\bullet}\otimes_{R}K$) so that we have morphisms of simplicial schemes $\tX_{\bullet,K}\to \tX_{\bullet,a}$. 
  We write $F_{a}$ (resp. $F_{K}$) for the restriction of $F$ to $\tX_{\bullet,a}$ (resp. $X_{K}$). If $\sigma\in E_{r}^{pq}(\tX_{\bullet,K},F_{K})$ then there is 
  an $a\in R$ and $\tau\in E_{r}^{pq}(\tX_{\bullet,a},F_{a})$ that lifts $\sigma$. 
\end{lemma}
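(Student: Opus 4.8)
The plan is to show that the entire spectral sequence over $K$ is, page by page, the filtered colimit over $a\in R\setminus\{0\}$ (ordered by divisibility) of the spectral sequences over the $R_a$, and then to read off the desired lift of $\sigma$ from the defining property of a filtered colimit.

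First I would set up the filtered system. Viewing the nonzero elements of $R$ as a directed poset under divisibility, we have $K=\varinjlim_a R_a$ and hence, in each simplicial degree $p$, $\tX_{p,K}=\varprojlim_a \tX_{p,a}$ with affine (localization) transition morphisms; each $\tX_{p,a}$ is of finite type over the affine scheme $\Spec R_a$ and so is quasi-compact and quasi-separated. The sheaves $F_a$ and $F_K$ are all restrictions of the single sheaf $F$, so \ref{t:cohSpread}, in the form of \ref{c:cohGeneric}, applies degree by degree and yields, for every $p,q$, an isomorphism
$$
\varinjlim_a E_1^{pq}(\tX_{\bullet,a},F_a)=\varinjlim_a H^q(\tX_{p,a},F_a)\xrightarrow{\ \sim\ }H^q(\tX_{p,K},F_K)=E_1^{pq}(\tX_{\bullet,K},F_K),
$$
the transition and comparison maps being the pullbacks provided by the functoriality of the spectral sequence under the morphisms $\tX_{\bullet,ab}\to\tX_{\bullet,a}$ and $\tX_{\bullet,K}\to\tX_{\bullet,a}$. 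Thus the $E_1$ page over $K$ is the filtered colimit of the $E_1$ pages over the $R_a$.

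Next I would propagate this through the pages by induction on $r$. The differentials $d_r$ are natural with respect to these base-change morphisms, so on each page the comparison maps commute with $d_r$. Since filtered colimits of abelian groups are exact, they commute with the formation of $\ker d_r$ and $\coker d_r$, hence with passage to homology, giving
$$
\varinjlim_a E_{r+1}^{pq}(\tX_{\bullet,a},F_a)\xrightarrow{\ \sim\ }E_{r+1}^{pq}(\tX_{\bullet,K},F_K)
$$
whenever the analogous isomorphism holds on the $E_r$ page. The base case $r=1$ is the preceding paragraph, so the isomorphism holds for all $r$. Finally, given $\sigma\in E_r^{pq}(\tX_{\bullet,K},F_K)$, this isomorphism identifies the target with $\varinjlim_a E_r^{pq}(\tX_{\bullet,a},F_a)$, and by the definition of a filtered colimit $\sigma$ is the image of some class $\tau\in E_r^{pq}(\tX_{\bullet,a},F_a)$ for a suitable $a$; this $\tau$ lifts $\sigma$ in the required sense.

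I expect the only point demanding real care to be the compatibility of the whole spectral sequence with the base-change functor and the colimit, namely verifying that the differentials and the comparison maps genuinely commute so that the inductive step is legitimate. Once that naturality is secured, exactness of filtered colimits does all the work and the base case is exactly \ref{c:cohGeneric}.
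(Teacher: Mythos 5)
Your proof is correct and follows essentially the same route as the paper: induction on the page number $r$, with the base case $r=1$ supplied by \ref{t:cohSpread}/\ref{c:cohGeneric} applied degree by degree, and the inductive step carried out through the structure of the spectral sequence. You are in fact somewhat more careful than the paper's own argument, which only invokes the surjection $\ker(d_{r-1})\twoheadrightarrow E_{r}^{pq}$ and lifts a preimage class; your full colimit isomorphism on each page also explains why a lift of a $d_{r-1}$-cycle becomes a cycle after further localization, a point the paper leaves implicit.
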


\begin{proof}
We argue by induction on $r$. For $r=1$ the result follows from the previous corollary and the fact that 
$$
\tX_{\bullet,K}=\varprojlim_{a\in R} \tX_{\bullet,a}. 
$$

For a general $r$, we have that 
$$
\ker(E_{r-1}^{pq}\to E_{r-1}^{p+r-1,q-r+2})\twoheadrightarrow E_{r}^{pq}
$$
so that $\sigma$ comes from a class in $E_{r-1}^{pq}$ which lifts. 
\end{proof}

\begin{corollary}\label{c:cohomolgyExtendSS}
    In the situation of the theorem, let 
    $$
    \sigma\in H^{n}(\tX_{\bullet,K},F_{K}). 
    $$
    Then there is an $a\in R$ so that $\sigma$ lifts to a class in 
    $$
    H^{n}(\tX_{\bullet,a},F_{a}). 
    $$
\end{corollary}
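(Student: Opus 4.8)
The plan is to reduce the statement about the abutment $H^n$ to the statement about the pages $E_r^{pq}$ already proved in Lemma \ref{l:ssSpread}, by dévissage along the finite filtration that the spectral sequence places on $H^n$. Recall that $E_1^{pq}(\tX_{\bullet}, F) \Rightarrow H^{p+q}(\tX_\bullet, F)$ is a first-quadrant spectral sequence, so for each $a$ (and for $K$) the group $H^n(\tX_{\bullet,a}, F_a)$ carries a decreasing filtration $H^n = F^0 \supseteq F^1 \supseteq \cdots \supseteq F^{n+1} = 0$ with $F^p/F^{p+1} \cong E_\infty^{p,n-p}$, and this filtration is compatible with the pullback maps $H^n(\tX_{\bullet,a},F_a) \to H^n(\tX_{\bullet,K}, F_K)$. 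Because the spectral sequence lives in the first quadrant, there is a finite page $r_0$ (for instance $r_0 = n+2$) at which $E_{r_0}^{p,n-p} = E_\infty^{p,n-p}$ for every $p$ with $0 \le p \le n$, uniformly in $a$, so that Lemma \ref{l:ssSpread} applies directly to each of these $E_\infty$-entries.

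Then I would peel off $\sigma$ one filtration step at a time. Writing $\sigma_0 = \sigma \in F^0 H^n(\tX_{\bullet,K})$, suppose inductively that $\sigma_p \in F^p H^n(\tX_{\bullet,K})$. Its image in $E_\infty^{p,n-p}(\tX_{\bullet,K})$ lifts, by Lemma \ref{l:ssSpread} at page $r_0$, to a class defined over some $R_{a_p}$; using the surjection $F^p H^n(\tX_{\bullet,a_p}) \twoheadrightarrow E_\infty^{p,n-p}(\tX_{\bullet,a_p})$ I lift this further to an honest class $\eta_p \in F^p H^n(\tX_{\bullet,a_p})$. By construction $\sigma_p - \eta_p|_K$ maps to zero in $E_\infty^{p,n-p}(\tX_{\bullet,K})$, hence lies in $F^{p+1}H^n(\tX_{\bullet,K})$; setting $\sigma_{p+1} = \sigma_p - \eta_p|_K$ I continue. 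Since $F^{n+1} = 0$, after $n+1$ steps I obtain $\sigma = \sum_{p=0}^n \eta_p|_K$ with each $\eta_p$ defined over $R_{a_p}$. Taking $a = a_0 a_1 \cdots a_n$ and pulling all the $\eta_p$ back to $\tX_{\bullet,a}$, their sum is a class in $H^n(\tX_{\bullet,a}, F_a)$ restricting to $\sigma$, which is the desired lift.

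The only points requiring care, none of them serious, are the finite degeneration of the first-quadrant spectral sequence (so that a single page $r_0$ suffices and there are only finitely many $\eta_p$ to produce) and the compatibility of the spectral sequence filtration with base change from $R_{a_p}$ to $K$; both are formal consequences of the functoriality of the spectral sequence in the simplicial scheme. I would note in passing that the same bookkeeping can be phrased without dévissage: exactness of filtered colimits together with Lemma \ref{l:ssSpread} shows $\varinjlim_{a} E_r^{pq}(\tX_{\bullet,a}, F_a) \cong E_r^{pq}(\tX_{\bullet,K}, F_K)$ for all $r$, including $r=\infty$, and hence $\varinjlim_a H^n(\tX_{\bullet,a}, F_a) \cong H^n(\tX_{\bullet,K}, F_K)$, of which the corollary is precisely the surjectivity statement.
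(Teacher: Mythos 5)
Your argument is correct and is essentially the paper's own proof: the paper simply says the corollary ``follows from the fact that the spectral sequence converges'' to $H^{n}(\tX_{\bullet},F)$, and your d\'evissage along the finite filtration (together with the degeneration of the first-quadrant spectral sequence at a finite page and Lemma \ref{l:ssSpread}) is exactly the standard unpacking of that statement. Your closing remark via exactness of filtered colimits is likewise a valid, equivalent packaging of the same argument.
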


\begin{proof}
This follows from the fact that the spectral sequence converges to the cohomology of the simplicial sheaf on the given simplicial scheme, 
see \cite[\href{https://stacks.math.columbia.edu/tag/09WJ}{Tag 09WJ}]{stacks-project}. 
\end{proof}

These spectral sequences can be used to extend Corollary \ref{c:cohGeneric} to algebraic spaces. 

\begin{proposition}
    Let $R$ be an integral domain with field of fractions $K$. Let $\tilde{X}$ be an algebraic space of finite type over $R$ and $F$ a sheaf on $\tilde{X}$. 
    Write $X=\tilde{X}_{K}$ and for each $a\in R$ set $X_{a}=\tilde{X}\otimes_{R}R_{a}$. 
    If $\sigma \in H^{n}(X,F|_{X})$ then there is an $a\in R$ and a $\tau\in H^{n}(X_{a},F)$ so that $\tau|_{K}=\sigma$. 
\end{proposition}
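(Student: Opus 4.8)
The plan is to reduce the statement for the algebraic space $\tX$ to the already-established version for simplicial schemes, \ref{c:cohomolgyExtendSS}, by replacing $\tX$ with the \v{C}ech nerve of an \etale\ presentation and invoking cohomological descent; this is precisely the strategy anticipated by the remark that the simplicial spectral sequences ``can be used to extend \ref{c:cohGeneric} to algebraic spaces.''

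First I would choose a surjective \etale\ morphism $U\to \tX$ with $U$ a scheme of finite type over $R$, which exists by the definition of an algebraic space, and form the \v{C}ech nerve $\tX_\bullet$, the simplicial scheme with $\tX_n = U\times_{\tX}\cdots\times_{\tX}U$ ($n+1$ factors). Because the diagonal of a (quasi-separated) algebraic space is representable by schemes, each $\tX_n$ is a scheme, and being assembled from fibre products of finite-type $R$-schemes along \etale\ maps it is again of finite type over $R$. Writing $F$ also for its pullback to $\tX_\bullet$, the formation of $\tX_\bullet$ commutes with the base changes $-\otimes_R K$ and $-\otimes_R R_a$, so that $\tX_{\bullet,K}$ is a presentation of $X$ and $\tX_{\bullet,a}$ is a presentation of $X_a$, with a morphism of simplicial schemes $\tX_{\bullet,K}\to \tX_{\bullet,a}$ exactly as in the hypotheses of \ref{l:ssSpread}.

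The essential input is cohomological descent for the \etale\ topology: for an algebraic space together with the \v{C}ech nerve of an \etale\ presentation, the spectral sequence $E_{1}^{pq}(\tX_\bullet,F)=H^{q}(\tX_{p},F_{p})\implies H^{p+q}(\tX_\bullet,F)$ recalled before \ref{l:ssSpread} converges to the cohomology of the space, see \cite{stacks-project}. The same holds after each base change, giving natural identifications
$$
H^{n}(X,F|_{X})\cong H^{n}(\tX_{\bullet,K},F_{K}),\qquad H^{n}(X_{a},F)\cong H^{n}(\tX_{\bullet,a},F_{a})
$$
compatible with the restriction maps induced by $\tX_{\bullet,K}\to \tX_{\bullet,a}$. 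Under the first identification $\sigma$ becomes a class in $H^{n}(\tX_{\bullet,K},F_{K})$; applying \ref{c:cohomolgyExtendSS} to the finite-type simplicial $R$-scheme $\tX_\bullet$ produces an $a\in R$ and a lift $\tau\in H^{n}(\tX_{\bullet,a},F_{a})$. Transporting $\tau$ across the second identification yields the desired class in $H^{n}(X_{a},F)$.

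The step I expect to require the most care is not the spreading out itself, which is now purely formal, but the two descent identifications and their compatibility with base change: one must know that simplicial cohomology of the nerve computes the cohomology of the algebraic space, and that the identifications intertwine the maps $\tX_{\bullet,K}\to\tX_{\bullet,a}$ with the restriction $X\to X_a$ so that the lift really satisfies $\tau|_{K}=\sigma$ on the nose. This last point is exactly the functoriality of the descent spectral sequence in the simplicial scheme recorded before \ref{l:ssSpread}, applied to the base-change morphism.
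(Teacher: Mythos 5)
Your argument is essentially identical to the paper's proof: the paper likewise takes an \etale\ presentation $U\to \tilde{X}$, forms the \v{C}ech nerve, invokes cohomological descent (citing \cite[\href{https://stacks.math.columbia.edu/tag/06XJ}{Tag 06XJ}]{stacks-project}) to identify the cohomology of the algebraic space with that of the simplicial scheme, and concludes by the simplicial spreading lemma \ref{l:ssSpread}. Your additional remarks on the fibre products being finite-type schemes and on compatibility of the descent identifications with base change are correct elaborations of steps the paper leaves implicit.
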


\begin{proof}
    Let $U\to X$ be an \etale\ presentation for $X$. We expand it to simplicial scheme $u:U_{\bullet}\to X$ by taking the Cech nerve. Then $u^{-1}F$ is 
    a simplicial sheaf on $U_{\bullet}$. Then  the result follows from \cite[\href{https://stacks.math.columbia.edu/tag/06XJ}{Tag 06XJ}]{stacks-project} and Lemma \ref{l:ssSpread}. 
\end{proof}

Another application of the same idea yields the result for algebraic stacks. 

\begin{theorem}\label{t:cohSpreadStack}
    Let $R$ be an integral domain with field of fractions $K$. 
Let $\ftX$ be an algebraic stack over $R$ of finite type so that it has a presentation $X\to \ftX$ so that $X$ is an $R$-scheme of finite type. 
If $F$ is a sheaf on the \'{e}tale site of $\ftX$ and 
$\sigma \in H^{n}(\ftX_{K},F_{K})$ then there is an $a\in R$ so that $\sigma$ lifts to $\ftX_{a}$. 
\end{theorem}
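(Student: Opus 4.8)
The plan is to mimic the proof for algebraic spaces given just above, replacing the \'{e}tale Čech nerve by the Čech nerve of the smooth presentation $X\to\ftX$. First I would form this Čech nerve $X_\bullet$, with
$$
X_p = \underbrace{X\times_{\ftX}\cdots\times_{\ftX}X}_{p+1}.
$$
Since the diagonal of $\ftX$ is representable by algebraic spaces and $\ftX$ is of finite type over $R$, each $X_p$ is an algebraic space of finite type over $R$, so $X_\bullet$ is a simplicial algebraic space of exactly the sort to which the previous proposition (the algebraic spaces analogue of \ref{c:cohGeneric}) applies term by term. Forming the Čech nerve commutes with the base changes $-\otimes_R K$ and $-\otimes_R R_a$, so $X_{\bullet,K}$ (resp. $X_{\bullet,a}$) is the Čech nerve of the presentation $X_K\to\ftX_K$ (resp. $X_a\to\ftX_a$), and the restrictions $F_K$, $F_a$ are compatible with these base changes.

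The key input is cohomological descent along the smooth presentation: the augmentation $X_\bullet\to\ftX$ should yield a first quadrant spectral sequence
$$
E_1^{pq}=H^q(X_p,F|_{X_p})\implies H^{p+q}(\ftX,F),
$$
functorial in $\ftX$ and compatible with the base changes above. The existence of the spectral sequence of the simplicial algebraic space $X_\bullet$ is \cite[\href{https://stacks.math.columbia.edu/tag/0D76}{Tag 0D76}]{stacks-project}; what must be justified is that it actually converges to the cohomology of $\ftX$, i.e. that a smooth surjective presentation, although not an \'{e}tale cover, is of (universal) cohomological descent for the big \'{e}tale site. This holds because a smooth surjection admits sections \'{e}tale locally on the base, so its Čech nerve is an \'{e}tale-local hypercovering and computes the cohomology of $\ftX$, just as the \'{e}tale Čech nerve does for algebraic spaces (compare the use of \cite[\href{https://stacks.math.columbia.edu/tag/06XJ}{Tag 06XJ}]{stacks-project} there). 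This is the step I expect to be the main obstacle: it is the only place where the argument genuinely departs from the \'{e}tale Čech nerve of the algebraic spaces case, and one must take care that the comparison is performed in the big \'{e}tale topos, since the paper deliberately avoids the lisse-\'{e}tale site.

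Granting the spectral sequence, the spreading-out argument is then formal and parallels \ref{l:ssSpread} and \ref{c:cohomolgyExtendSS}. I would induct on the page $r$. For $r=1$ the $E_1$-terms are the groups $H^q(X_{p,K},F_K)$ for the finite type algebraic spaces $X_{p,K}$, and each such class spreads out to some $H^q(X_{p,a},F_a)$ by the previous proposition; since only finitely many $(p,q)$ are relevant to a fixed degree $n$, shrinking $R_a$ finitely often handles them all at once. The passage from $E_{r-1}$ to $E_r$ is purely homological, using the surjection
$$
\ker\!\big(E_{r-1}^{pq}\to E_{r-1}^{p+r-1,q-r+2}\big)\twoheadrightarrow E_r^{pq}
$$
exactly as in \ref{l:ssSpread}. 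Finally, convergence of the spectral sequence to $H^n(\ftX,F)$ promotes the spreading of the associated graded pieces to a lift of $\sigma\in H^n(\ftX_K,F_K)$; choosing $a$ to clear all finitely many pieces yields $\tau\in H^n(\ftX_a,F_a)$ with $\tau|_K=\sigma$, as required.

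If one wished to avoid invoking smooth cohomological descent directly, an alternative would be to refine $X_\bullet$ to a bisimplicial scheme by choosing an \'{e}tale atlas of each algebraic space $X_p$ and taking its Čech nerve, reducing everything to simplicial schemes where \ref{l:ssSpread} applies verbatim; the descent spectral sequence of the total simplicial scheme then plays the role above. Either way, the only genuinely new ingredient beyond the algebraic spaces case is the descent statement for the smooth presentation.
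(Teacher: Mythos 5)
Your proposal follows the paper's own argument: the paper likewise forms the coskeleton (Čech nerve) of the smooth presentation $X\to\ftX$, invokes the fact that this simplicial object computes the cohomology of the stack (citing the Stacks Project descent result), and then applies the algebraic-spaces analogue of \ref{l:ssSpread} term by term with the same induction on spectral sequence pages. Your extra care over why smooth (rather than \'{e}tale) cohomological descent holds, and the bisimplicial fallback, are elaborations of steps the paper delegates to its citations, not a different route.
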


\begin{proof}
    We start by remarking that, by using the previous proposition, we can now prove an analogue of 
    Lemma \ref{l:ssSpread} for algebraic spaces. 

    Choose a presentation $X\to \ftX$ as in the statement of the theorem. 
    Then the simplicial space $X_{\bullet}$ formed by taking the coskeleton of $u:X\to \ftX$
    has the same cohomology with coefficients in $u^{-1}F$ as the stack. This follows from 
 \cite[\href{https://stacks.math.columbia.edu/tag/06XF}{Tag 06XF}]{stacks-project}.  
    The remark at the start of this prove now completes the proof of the theorem. 
\end{proof}

 
\subsection{The Brauer group of an algebraic stack}

Let $\fX$ be an algebraic stack. We define its Brauer group to be $$ \Br(X):=H^{2}((\sch/\fX)_{\et},\Gm_{\fX})_{tors}.$$
One could potentially use the lisse-etale site to define this but this makes no difference, see \cite[A.1]{heinloth2009}.

If $\fX$ is an algebraic stack over $k$, our number field we introduce some variations on the Brauer group that will play an important role later. 
We define 
$$
\Br_{1}(\fX) := \ker\left( \Br(\fX) \to \Br(\fX\otimes_{k} \overline{k})\right)
$$
and
$$
\Br_{a}(\fX) := \Br_{1}(\fX)/\left( p^{*}\Br(k)\right) 
$$
where $p:\fX\to \Spec(k)$ is the structure map. 

These definitions are just adaptations to algebraic stacks of the corresponding constructions in \cite{sansuc1981}.

\section{The Brauer-Manin pairing for algebraic stacks}\label{s:BM}

 
\subsection{Spreading out schemes, algebraic spaces and their morphisms}

The following result is well-known and we record it here for future use. 

\begin{theorem}\label{t:algspaceSpread}
Let $R$ be an integral domain with field of fractions $K$.
\begin{enumerate}
    \item If $X$ is an algebraic space of finite type over $K$ then there is open subscheme $V\subseteq \spec R$ and an algebraic space $\tilde{X}\to V$ whose generic 
fibre is $X$. Further, $\tilde{X}$ is of finite presentation over $V$. 
\item If $ \tilde{X} $ and $ \tilde{Y} $ are algebraic spaces of finite presentation over $\spec R$ and $f: \tilde{X}_{K} \to \tilde{Y}_{K}$ is a morphism 
over their generic fibers then $f$ can be lifted to a morphism $ \tilde{f}: \tilde{X}_{V} \to \tilde{Y}_{V}$ for some open subscheme $V\subseteq \spec R$.  
\item In the situation of the previous part, if $f$ is smooth or \etale\  then there is an open subscheme of $\spec R$ over which the lift is smooth or \etale. 
\item  In the above situation suppose that we have finitely presented algebraic spaces $ \tilde{X},\ \tilde{Y}$ and $ \tilde{Z}$ over $S$ with generic fibers $X,\ Y$ and $Z$. If 
$ \tilde{f} : \tilde{X} \to \tilde{Y} $,\ $ \tilde{g}: \tilde{Y}\to \tilde{Z}$ and $ \tilde{h}: \tilde{X}\to \tilde{Z}$ are morphisms whose generic fibers satisfy 
$ g\circ f =h$ then there is an open subscheme $U$ of $S$ with
$$
\tilde{g} \circ \tilde{f} = \tilde{h}
$$
over $U$. 
\end{enumerate}
\end{theorem}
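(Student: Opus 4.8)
The plan is to realise all four assertions as instances of descent through a single cofiltered limit of affine bases, and then to invoke the standard spreading-out machinery for schemes and algebraic spaces. The starting observation is that, since $R$ is a domain, its fraction field is the filtered colimit $K=\colim_{0\ne a\in R}R_{a}$ of the localisations $R_{a}=R[1/a]$, so that on spectra
$$
\spec K=\varprojlim_{0\ne a\in R}\spec R_{a},
$$
a cofiltered limit of the affine opens $V_{a}:=\spec R_{a}\subseteq\spec R$ whose transition morphisms $V_{ab}\hookrightarrow V_{a}$ are open immersions, in particular affine. This is exactly the indexing already used in \ref{c:cohGeneric} and \ref{l:ssSpread}, and it places us in the setting where the limit theory of \cite{stacks-project} applies; the scheme-level inputs are those of \cite[\S 3]{poonen2017}.

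For part (1), I first note that $X$, being of finite type over the field $K$, is automatically of finite presentation. I would then appeal to the descent of finite-presentation algebraic spaces through this limit: there exist an index $a$ and a finitely presented algebraic space $\tilde{X}$ over $V:=V_{a}$ together with an isomorphism $\tilde{X}\times_{V}\spec K\cong X$. Concretely this is obtained by choosing an \etale\ presentation $U\to X$ with $U$ a finite-type $K$-scheme, spreading $U$ and the \etale\ equivalence relation $\mathcal R=U\times_{X}U\rightrightarrows U$ out to finitely presented schemes over some $V_{a}$, and forming the quotient --- the delicate point in this last step is isolated in the final paragraph.

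Parts (2), (3) and (4) are then the morphism-, property- and relation-level versions of the same principle. For (2), since $\tilde{Y}$ is of finite presentation the morphism functor commutes with the limit, giving $\Hom_{\spec K}(\tilde{X}_{K},\tilde{Y}_{K})=\colim_{a}\Hom_{V_{a}}(\tilde{X}_{V_{a}},\tilde{Y}_{V_{a}})$, so the generic morphism $f$ is the restriction of some $\tilde{f}\colon\tilde{X}_{V}\to\tilde{Y}_{V}$ over $V=V_{a}$. For (3), smoothness and \etale ness descend through the limit: if a finitely presented morphism becomes smooth (resp.\ \etale) after base change to $\spec K$, it is already so over some $V_{a}$, and after shrinking $V$ the lift $\tilde{f}$ acquires the property. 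For (4), the two morphisms $\tilde{g}\circ\tilde{f}$ and $\tilde{h}$ have the same generic fibre; because the morphism set is the \emph{filtered} colimit of the $\Hom_{V_{a}}$, two sections agreeing at the generic point already agree over some $V_{a}$, and we take $U$ (in the notation of the statement, with $S=\spec R$) to be that $V_{a}$.

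The step I expect to be the main obstacle is the one hidden in part (1): guaranteeing that the spread-out datum is genuinely an algebraic space and not merely an \etale\ groupoid-like diagram. After spreading $\mathcal R\rightrightarrows U$ out, I must arrange, over a single open $V$, that the source and target maps $\tilde{s},\tilde{t}\colon\tilde{\mathcal R}\to\tilde{U}$ are \etale\ (part (3) for schemes), that the groupoid axioms persist (part (4) for schemes), and --- most delicately --- that $\tilde{\mathcal R}\to\tilde{U}\times_{V}\tilde{U}$ is still a monomorphism, i.e.\ that $\tilde{\mathcal R}\rightrightarrows\tilde{U}$ remains an \emph{equivalence} relation. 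This last condition I would secure by spreading out the isomorphism witnessing that the relative diagonal $\tilde{\mathcal R}\to\tilde{\mathcal R}\times_{\tilde{U}\times_{V}\tilde{U}}\tilde{\mathcal R}$ is an isomorphism, again via parts (2) and (4) at the scheme level. Once these hold over $V$, the quotient $\tilde{U}/\tilde{\mathcal R}$ is a finitely presented algebraic space over $V$ with generic fibre $X$, which finishes (1); no input beyond the constructibility statements already quoted is needed for the rest.
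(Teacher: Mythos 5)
Your proposal is correct and follows essentially the same route as the paper: reduce to the scheme case via \cite[3.2.1]{poonen2017} (equivalently the limit formalism over $\spec K=\varprojlim\spec R_{a}$), spread out an \etale\ presentation $U\to X$ together with the equivalence relation $U\times_{X}U\rightrightarrows U$, check that the \etale ness, the monomorphism condition and the groupoid axioms persist over some open, and take the quotient; parts (2)--(4) are handled by the same colimit-of-$\Hom$ principle that the paper invokes by lifting morphisms on atlases. Your explicit attention to the persistence of the equivalence-relation axioms is exactly the point the paper also singles out, so there is nothing to add.
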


\begin{proof} 
    For a scheme \cite[3.2.1]{poonen2017} proves the first three parts. For a scheme the last part is an elementary result of commutative algebra. 
    
    We move on to the case of an algebraic space. 
    
    The fact that $X$ is of finite type means that we can find a presentation $U\to X$ with $U$ of finite type over $K$, 
    see \cite[\href{https://stacks.math.columbia.edu/tag/03XE}{Tag 03XE}]{stacks-project}. It follows that $U\times_{X} U$ is also of finite type. 
By the result for schemes, we can find an open subscheme $V\subseteq \spec R$ and schemes of finite presentation $ \tilde{Q} $ and $ \tilde{U}$ that extend
$U\times_{X}U$ and $U$ respectively. By the same result, we can assume that the two projections $U\times_{X}U \to U$ extend to \etale\ morphisms and 
the morphism $U\times_{X}U\to U\times_{K}U$ extends to a monomorphism 
$$
m: \tilde{Q} \to \tilde{U}\times_{R} \tilde{U}. 
$$
It is claimed that after further restricting $V$ we can assume that this data forms an \etale\ equivalence relation. 
For example, to check the first condition of being an equivalence relation, reflexivity, we need to show that the diagonal map $ \tilde{U}\to \tilde{U} \times_{R} \tilde{U}$
factors through $ \tilde{Q}$. This follows from the result for schemes. The other axioms are proved similarly.

The quotient algebraic space $ \tilde{U}/ \tilde{Q}$, see \cite[\href{https://stacks.math.columbia.edu/tag/02WW}{Tag 02WW}]{stacks-project}, is the required extension of $X$. 

The remaining two parts follow from the known result for schemes by lifting the morphisms on atlases. Note that a morphism of algebraic spaces is smooth or \etale\ 
if the induced morphism on atlases is. 
\end{proof}

 
\subsection{Spreading out algebraic stacks} \label{ss:spreadStack}

\begin{theorem}\label{t:stackSpread}
Let $R$ be an integral domain with function field $K$. Let $\fX$ be an algebraic stack of finite type over $K$. Then there is an algebraic stack 
$\tilde{\fX}$ over an open subscheme $U\subseteq \spec R$ whose generic fiber is $\fX$. 
\end{theorem}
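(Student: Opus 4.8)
The plan is to realise $\fX$ as the quotient stack of a smooth groupoid in algebraic spaces over $K$, to spread that entire groupoid out to an open subscheme $U \subseteq \spec R$ by means of Theorem \ref{t:algspaceSpread}, and then to define $\tilde{\fX}$ as the quotient stack of the spread-out groupoid. This reduces the problem for stacks to the already-established case of algebraic spaces and their morphisms.

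First I would choose a smooth presentation $X \to \fX$ with $X$ a scheme of finite type over $K$, which exists by the definition of an algebraic stack. Writing $W := X \times_{\fX} X$, representability of the diagonal of $\fX$ makes $W$ an algebraic space, of finite type over $K$ since $X$ and $\fX$ are, and the two projections $s, t : W \to X$ are smooth, being base changes of the smooth surjection $X \to \fX$. Together with the composition $c : W \times_{s, X, t} W \to W$, the identity $e : X \to W$ and the inverse $i : W \to W$ this exhibits $\fX$ as the quotient stack $[X/W]$ of a smooth groupoid in algebraic spaces over $K$ (see \cite[\href{https://stacks.math.columbia.edu/tag/04T4}{Tag 04T4}]{stacks-project}).

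Next I would spread the data out. By Theorem \ref{t:algspaceSpread}(1) the algebraic spaces $X$ and $W$ extend to finitely presented algebraic spaces $\tilde X$ and $\tilde W$ over some open $V \subseteq \spec R$ with generic fibres $X$ and $W$. By part (3) I may, after shrinking $V$, lift $s$ and $t$ to smooth morphisms $\tilde s, \tilde t : \tilde W \to \tilde X$; smoothness (hence flatness) is what guarantees that the fibre product $\tilde W \times_{\tilde s, \tilde X, \tilde t} \tilde W$ is again a finitely presented algebraic space over $V$ extending the domain $W \times_{s,X,t} W$ of $c$. Then part (2) lets me lift $c$, $e$ and $i$ to morphisms $\tilde c, \tilde e, \tilde i$ over $V$. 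Finally, each groupoid axiom --- the compatibilities of $\tilde s, \tilde t$ with $\tilde c$, associativity of $\tilde c$, the two unit axioms for $\tilde e$, and the two inverse axioms for $\tilde i$ --- is an equality of morphisms of finitely presented algebraic spaces that holds on the generic fibre, and hence holds over some open subscheme of $V$ by Theorem \ref{t:algspaceSpread}(4). Intersecting these finitely many open subschemes produces an open $U \subseteq V$ over which $(\tilde X, \tilde W, \tilde s, \tilde t, \tilde c, \tilde e, \tilde i)$ is a bona fide smooth groupoid in algebraic spaces.

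It then remains to set $\tilde{\fX} := [\tilde X / \tilde W]$, which is an algebraic stack over $U$ because the quotient of a groupoid in algebraic spaces with smooth source and target is algebraic (see \cite[\href{https://stacks.math.columbia.edu/tag/04TK}{Tag 04TK}]{stacks-project}). Since the formation of a quotient stack commutes with base change along $\spec K \to U$, and since the base change of the groupoid $(\tilde X, \tilde W)$ to $K$ is the original groupoid $(X, W)$, the generic fibre of $\tilde{\fX}$ is $[X/W] \cong \fX$, as required. I expect the only real obstacle to be organisational rather than conceptual: arranging smoothness of $\tilde s, \tilde t$ together with all of the finitely many groupoid axioms over a single open $U$. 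The one point that genuinely needs care is ordering the spreading-out correctly --- one must establish flatness of $\tilde s, \tilde t$ before spreading out $\tilde c$ and its axioms, so that the relevant fibre products commute with the eventual base change to $\spec K$.
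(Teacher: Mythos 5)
Your proposal is correct and follows essentially the same route as the paper's own proof: present $\fX$ by a finite type scheme, spread out the resulting smooth groupoid in algebraic spaces using Theorem \ref{t:algspaceSpread}, verify the groupoid axioms over a smaller open subscheme, and take the associated quotient stack. Your write-up is in fact more explicit than the paper's sketch about which parts of Theorem \ref{t:algspaceSpread} handle which step and about the need to secure smoothness of $\tilde s,\tilde t$ before forming the fibre product for $\tilde c$.
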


\begin{proof} 
Once again, this is a standard result so we provide a brief sketch of the argument only. 

Consider a presentation $X\to \fX$. In view of the hypothesis on $\fX$ the scheme $X$ can be chosen to be of finite type. As smooth morphisms are of finite presentation we 
have that all the products $X\times_{\fX} \times_{\fX} \dots \times_{\fX}X $ are of finite type over $K$. This data forms a groupoid in algebraic spaces 
$(X,X\times_{\fX}X,\pi_{1},\pi_{2}),\pi_{13})$,
\cite[\href{https://stacks.math.columbia.edu/tag/0231}{Tag 0231}]{stacks-project}, \cite[\href{https://stacks.math.columbia.edu/tag/0437}{Tag 0437}]{stacks-project} and 
\cite[2.4.3]{laumon2000}. We can find an open subscheme $U\subseteq \spec R$ to which all the defining data 
of this groupoid lifts. We require this lift to form a groupoid, which amounts to certain diagrams commuting as in \cite[2.4.3]{laumon2000}. By further refinement, 
 we can find $U'\subseteq U\subseteq \spec R$ over which we have a groupoid in schemes. To this groupoid in schemes there is an associated algebraic stack,
see \cite[4.3.1]{laumon2000}. This algebraic stack lifts $\fX$. 
\end{proof}

\begin{corollary}
    \label{t:spreadBrauer}
We work in the situation of the previous theorem. Let $x\in\Br(\fX)$ be a Brauer class. Then there is an open subscheme $V\subseteq U$
and a Brauer class $ \overline{x} \in \Br( \tilde{\fX} |_V)$ extending $x$. 
\end{corollary}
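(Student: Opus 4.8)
The plan is to combine the spreading-out theorem for stacks (\ref{t:stackSpread}) with the cohomological spreading result (\ref{t:cohSpreadStack}) already established in the excerpt.

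First I would recall that by Theorem \ref{t:stackSpread} we have an algebraic stack $\tilde{\fX}$ over an open $U \subseteq \spec R$ with generic fiber $\fX$, and that by construction $\tilde{\fX}$ admits a presentation $\tilde{X} \to \tilde{\fX}$ with $\tilde{X}$ a finite type $R$-scheme (this is exactly the groupoid-in-schemes $\tilde X$ produced in the proof of \ref{t:stackSpread}). This is precisely the hypothesis needed to invoke \ref{t:cohSpreadStack}.

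Second, I would unwind the definition of the Brauer group. The class $x \in \Br(\fX)$ is by definition a torsion element of $H^2((\sch/\fX)_{\et}, \Gm)$. Taking $F = \Gm$ on the \'{e}tale site of $\tilde{\fX}$ and noting (as remarked after \ref{t:cohSpread}) that $\Gm$ pulls back to $\Gm$ on the big \'{e}tale site, we may regard $x$ as a class in $H^2(\tilde{\fX}_K, (\Gm)_K)$ since $\fX = \tilde{\fX}_K$. Applying \ref{t:cohSpreadStack} yields an element $a \in R$, hence an open $V := \spec(R_a) \subseteq U$, and a class $\overline{x} \in H^2(\tilde{\fX}|_V, \Gm)$ lifting $x$, i.e. restricting to $x$ on the generic fiber.

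The one point requiring a short extra argument is that $\overline{x}$ is \emph{torsion}, so that it genuinely lies in $\Br(\tilde{\fX}|_V)$ rather than merely in $H^2$. I would handle this by choosing $n$ with $nx = 0$ and observing that $n\overline{x}$ restricts to $0$ on the generic fiber; since $n\overline{x}$ is a cohomology class that vanishes generically, a further application of the colimit formula from \ref{t:cohSpreadStack} (or directly the injectivity of $H^2(\tilde{\fX}|_V) \to \varinjlim_a H^2(\tilde{\fX}|_{\spec R_a}) = H^2(\fX)$) lets us shrink $V$ so that $n\overline{x} = 0$ on the smaller open set. After this shrinking, $\overline{x} \in \Br(\tilde{\fX}|_V)$ is the desired extension. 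The main (though routine) obstacle is thus bookkeeping the passage from a bare $H^2$ class to a torsion one, which amounts to one more shrinking of the base using the same direct-limit mechanism.
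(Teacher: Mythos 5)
Your proof is correct and follows essentially the same route as the paper, whose entire argument is to cite Theorem \ref{t:cohSpreadStack} with $F=\Gm$. The extra step you include --- shrinking the base once more so that the spread-out class is genuinely torsion and hence lies in $\Br(\tilde{\fX}|_V)$ rather than merely in $H^2$ --- is a detail the paper leaves implicit, and your handling of it via the direct-limit description of $H^2(\fX)$ is sound.
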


\begin{proof}
  This follows from Theorem \ref{t:cohSpreadStack}. 
\end{proof}

\subsection{The Brauer-Manin pairing on algebraic stacks}

In this subsection we will construct the Brauer-Manin pairing on an algebraic stack of finite type over a number field. Before doing
so, let's recall a few facts pertaining to the construction of the pairing for schemes. 

For each $v\in\Omega_{k}$, 
local class field theory constructs a morphism
$$
\inv_{k,v}:\Br(k_{v})\to \QQ/\ZZ,
$$
known as the Hasse invariant. The subscripts $k$ and $v$ will frequently be dropped from the notation when the context makes them clear. 
For finite places, this is an isomorphism. Further there is a short exact sequence 
$$
0\to \Br(k)\to \bigoplus_{v\in\Omega_k}\Br(k_{v}) \stackrel{\sum}{\to}\QQ/\ZZ. 
$$
If $X$ is a finite type scheme over $k$ then given an adelic point $x\in X(\bA_{k})$ we obtain for each $v\in\Omega_{k}$
a point $x_{v}\in X(k_{v})$ by restricting along the projection $\bA_{k}\to k_{v}$. The scheme can be spread out to a scheme over 
$\sO_{k,S}$ using Theorem \ref{t:algspaceSpread}. 
All but finitely many of the $x_{v}$ will lie inside
$X(\sO_{v})$ where $v\not\in S$. 

Given a Brauer class $b\in \Br(X)$, we can also spread it out to a Brauer class over $\sO_{k,S}$ by potentially increasing $S$, see Theorem \ref{t:spreadBrauer}. 
As $\Br(\sO_{v})=0$ we obtain a pairing 
$$
X(\bA_{k})\times \Br(X) \to \QQ/\ZZ 
$$
given by 
$$
\langle x,b\rangle = \sum_{v\in\Omega_k} \inv_{k,v}(x_{v}^{*}(b)). 
$$
The sum is finite in view of the remarks above.

Now let $\fX$ be a finite type algebraic stack over $k$. We can assume that it lifts to an algebraic stack over $\sO_{k,S}$. 
Let $\bA_{k}$ be the ring of adeles of $k$. Given an adelic point $x\in\fX(\bA_{k})$ we obtain for each $v\in \Omega_{k}$ a $k_{v}$-point by restricting along 
the projection 
$$
\bA_{k}\to k_{v}. 
$$
We will denote this point by $x_{v}$. All but finitely many of these will lift to $\sO_{v}$-points. 

We define a pairing, the Brauer-Manin pairing, 
$$
\langle,\rangle: \fX(\bA_{k})\times \Br(\fX) \to \QQ/\ZZ 
$$
given by 
$$
\langle(x_{v}),b\rangle = \sum_{v\in S} \inv_{k,v}x_v^* (b).
$$

\begin{proposition}
The pairing constructed above exists, in other words the sum is finite. 
\end{proposition}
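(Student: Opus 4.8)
The plan is to mimic the argument already given for schemes: I will show that $x_v^*(b)=0$ for all but finitely many places $v$, so that only finitely many terms of the sum can be nonzero. The whole point is to exploit the vanishing $\Br(\sO_v)=0$, which holds because $\sO_v$ is a henselian local ring with finite residue field, so that every Brauer class over it is detected on the closed point, and $\Br$ of a finite field is trivial.

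First I would spread everything out. By \ref{t:stackSpread} there is a finite set $S\subset\Omega_{k}$ (containing the archimedean places) and an algebraic stack $\tilde{\fX}$ of finite type over $\sO_{k,S}$ whose generic fibre is $\fX$; by the liftability assumption we may take this spreading compatible with a chosen lifting presentation. Next, using \ref{t:spreadBrauer}, after possibly removing finitely many closed points from $\spec\sO_{k,S}$---equivalently, after enlarging $S$---the class $b\in\Br(\fX)$ extends to a Brauer class $\overline{b}\in\Br(\tilde{\fX})$ restricting to $b$ on the generic fibre. Finally, as recalled in the discussion preceding the statement, all but finitely many of the local points $x_v$ lift to $\sO_v$-points $\tilde{x}_v\in\tilde{\fX}(\sO_v)$; enlarging $S$ once more we may assume this holds for every $v\notin S$.

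With this setup the computation is short. For $v\notin S$, functoriality of the big \'etale cohomology, and hence of $\Br$, applied to the morphism $\tilde{x}_v:\spec\sO_v\to\tilde{\fX}$ produces a class $\tilde{x}_v^*(\overline{b})\in\Br(\sO_v)$; compatibility of $\overline{b}$ with $b$ together with the commuting square relating $\spec k_v\to\spec\sO_v$ with the inclusion of the generic fibre shows that $x_v^*(b)$ is the image of $\tilde{x}_v^*(\overline{b})$ under the restriction $\Br(\sO_v)\to\Br(k_v)$. Since $\Br(\sO_v)=0$, we conclude $x_v^*(b)=0$, and therefore $\inv_{k,v}\,x_v^*(b)=0$, for all $v\notin S$. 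Thus at most the finitely many places in $S$ contribute, and the sum is finite.

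The step I expect to be the main obstacle is this last compatibility: one must check carefully that evaluation of the spread-out class at the integral point $\tilde{x}_v$ is genuinely compatible, via the functoriality on the big \'etale site $(\sch/\tilde{\fX})_{\et}$, with evaluation of $b$ at $x_v$ over the generic point. This is where the precise formalism of pulling back Brauer classes along morphisms of stacks---and the fact that $\overline{b}$ restricts to $b$---has to be invoked, rather than merely the numerology of how many places are excluded, which is routine, being a finite union of finite exceptional sets.
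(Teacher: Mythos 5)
Your proposal is correct and follows the same route as the paper's own proof: spread out the stack via \ref{t:stackSpread}, spread out the Brauer class via \ref{t:spreadBrauer}, and use $\Br(\sO_v)=0$ to kill all but finitely many local terms. Your write-up is somewhat more careful about the compatibility of pulling back the spread-out class at integral points versus the original class at the generic fibre, but the argument is the same.
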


\begin{proof}
    By Theorem \ref{t:stackSpread}  there is a $f\in \sO_{k}$ so that the 
    stack $\fX$ lifts to a stack $\ftX$ over  $\sO_{k}[f^{-1}]$. 
    We can further assume that the Brauer class lifts to $\ftX$ by Theorem \ref{t:spreadBrauer}. 
    There are only finitely many primes outside $\Spec(\sO_{k}[f^{-1}])$ and the Brauer class 
    $x_{v}^{*}(b)$ vanishes for each $v\in\Spec(\sO_{k}[f^{-1}])$. The reason is that the Brauer group
    $
    \Br(\sO_{v})=0
    $ is trivial. 
\end{proof}

For a subset $B\subseteq \Br(\fX)$ we denote by $\fX(\bA_{k})^B$ the $B$-fixed point locus (or $B$-orthogonal locus) of this pairing. 
It is the subset 
$$
\fX(\bA_{k})^{B}:=\{ 
    x\in \fX(\bA_{k}) \mid \langle b,x\rangle  =0,\ \forall\, b\in B 
\}. 
$$

 
\subsection{Continuity of the Brauer-Manin pairing}

\begin{proposition}
    Let $k_{v}$ be a local field. 
    Let $\fX$ a finite type liftable  algebraic stack over $k_{v}$. Let $b\in \Br(\fX)$. Then the map 
    $$
    \fX(k_{v})\to \QQ/\ZZ
    $$
    given by 
    $$
    x\mapsto \inv(x^{*}(b))
    $$
    is continuous. 
\end{proposition}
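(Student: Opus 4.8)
The plan is to reduce the continuity statement on the stack $\fX(k_v)$ to the corresponding statement on a lifting presentation, where the result is already known for schemes and algebraic spaces. First I would fix a lifting presentation $P \to \fX$, so that $P$ is a finite type algebraic space over $k_v$ and the induced map $\pi: P(k_v) \to \fX(k_v)$ is surjective. By the definition of the topology on $\fX(k_v)$ in \S\ref{ss:stackTop}, the space $\fX(k_v)$ carries the quotient topology induced by $\pi$. Hence a map $\phi: \fX(k_v) \to \QQ/\ZZ$ is continuous if and only if the composite $\phi \circ \pi : P(k_v) \to \QQ/\ZZ$ is continuous. This is the key structural observation that lets us work on $P$ rather than on $\fX$ directly.

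Next I would pull back the Brauer class. Let $b \in \Br(\fX)$ and set $b_P := b|_P \in \Br(P)$, the restriction along $P \to \fX$. For any $k_v$-point $p \in P(k_v)$ with image $x = \pi(p) \in \fX(k_v)$, functoriality of pullback gives $p^*(b_P) = x^*(b)$ in $\Br(k_v)$, and therefore $\inv(p^*(b_P)) = \inv(x^*(b))$. Consequently the composite $\phi \circ \pi$ is precisely the evaluation map $p \mapsto \inv(p^*(b_P))$ on $P(k_v)$. So it suffices to prove the continuity statement for the algebraic space $P$ and the class $b_P$.

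The remaining task is the continuity of $p \mapsto \inv(p^*(b_P))$ for a finite type algebraic space $P$ over $k_v$. The standard argument is that the Hasse invariant $\inv: \Br(k_v) \to \QQ/\ZZ$ is injective on torsion (an isomorphism for finite places), so it is enough to show that the map $p \mapsto p^*(b_P) \in \Br(k_v)$ is locally constant. A Brauer class on $P$ is represented, \'etale-locally, by an Azumaya algebra (or a class in $H^2(-, \Gm)$) that is defined over a finite type scheme; spreading out $P$ over $\sO_v$ as in \ref{t:algspaceSpread} and using that $\Br(\sO_v) = 0$, one sees that the value $p^*(b_P)$ depends only on the image of $p$ in a suitable \'etale neighbourhood, so it is constant on a basis of opens of $P(k_v)$. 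For schemes this is the classical continuity of the local Brauer--Manin evaluation; to pass to the algebraic space $P$ I would choose an \'etale atlas $U \to P$ with $U$ a finite type scheme, note that $U(k_v) \to P(k_v)$ is an open surjection inducing a local homeomorphism by the theorems of \S\ref{s:topology} (\'etale maps go to local homeomorphisms), and deduce local constancy on $P(k_v)$ from local constancy on $U(k_v)$.

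The main obstacle I anticipate is the very last reduction: establishing local constancy of $p \mapsto p^*(b_P)$ on the scheme-level $U(k_v)$ in a clean way. The honest content here is that a Brauer class on a finite type $k_v$-scheme, evaluated at varying $k_v$-points, factors through a constant value on each connected piece of a sufficiently fine \'etale cover, which is where the spreading-out over $\sO_v$ and the vanishing $\Br(\sO_v)=0$ do the real work. Everything else---the quotient topology reduction and the functoriality of pullback---is formal, so I would expect to spend most of the write-up justifying this local constancy on schemes and then transporting it across the \'etale atlas to $P$ and across the quotient map to $\fX$.
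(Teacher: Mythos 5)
Your proposal follows essentially the same route as the paper: reduce from the stack to a lifting presentation via the quotient topology, then from the algebraic space to a scheme via an \'etale local homeomorphism, and invoke the classical local constancy of $y\mapsto \inv(y^{*}b)$ on $k_v$-points of a finite type scheme (the paper cites \cite[pg.~235]{poonen2017} for exactly this). The only small imprecision is your claim that a single \'etale atlas $U\to P$ is surjective on $k_v$-points; the paper instead chooses, for each given point of $P(k_v)$, an \'etale scheme neighbourhood through which that point lifts (\cite[Proposition 5.4]{conrad2020}), which suffices since local constancy is a pointwise-local property.
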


\begin{proof}
    First assume that $\fX$ is an algebraic space. Using \cite[Proposition 5.4]{conrad2020} we can find a finite type scheme $X$
    and \etale\ morphism $f:X\to\fX$ so that a point $ \overline{x} \in \fX(k_{v})$ lifts to a point $x\in X$. The morphism 
    $X\to \fX$ is a local homeomorphism. By \cite[pg. 235]{poonen2017} there is an open neighborhood of $x$ on which 
    the map
    $$
    y\mapsto \inv(y^{*}f^{*}b)
    $$
    is constant. The result for algebraic spaces follows. 

    Now suppose $\fX$ is a finite type liftable algebraic stack. 
    Fix a point $ \overline{x}\in\fX(k_{v})$. 
    There is a presentation by a finite type algebraic space $f:X\to \fX$ such that $ \overline{x} $ lifts to a point $x\in X(k_{v})$. 
    Then we have a diagram
    \begin{center}
        \begin{tikzcd}
            X(k_v)  \ar[dr, "f"] \ar[d, "q"] & \\ 
            \fX(k_v)  \ar[r, "g"] & \QQ/\ZZ
        \end{tikzcd}
    \end{center}
    with $q$ a quotient map of topological spaces. Then the continuity of $g$ follows from the continuity of $f$. 
\end{proof}

\begin{proposition} \label{p:brauerClosed}
    Let $\fX$  be an algebraic stack over a number field $k$. Assume that $\fX$ is $S$-liftable and of finite type. For each $b\in \Br(\fX)$
    the function
    $$
    \fX(\bA_{k})\to \QQ/\ZZ 
    $$
    given by 
    $$
    x\mapsto \langle b,x\rangle
    $$
    is continuous.

\end{proposition}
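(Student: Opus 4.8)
The plan is to reduce the statement to the continuity result of the preceding proposition by unwinding the colimit and quotient topologies that define $\fX(\bA_{k})$. First I would spread out the Brauer class: by \ref{t:spreadBrauer} there is a finite set $S_{0}\supseteq S$ such that $b$ extends to a class defined over $\sO_{k,S_{0}}$. Since the finite sets $T$ with $T\supseteq S_{0}$ are cofinal among those containing $S$, the space $\fX(\bA_{k})=\colim_{T}\fX(k^{T})$ may be presented as the colimit over this cofinal subsystem, whose transition maps $\fX(k^{T})\hookrightarrow \fX(k^{T'})$ are continuous (indeed open) by \ref{l:isOpen}. Hence it suffices to prove that the restriction of $x\mapsto \langle b,x\rangle$ to each $\fX(k^{T})$, with $T\supseteq S_{0}$, is continuous.

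Fix such a $T$ and an $S$-lifting presentation $f:P\to\fX$ with $P$ a separated finite type algebraic space. The topology on $\fX(k^{T})$ is the quotient topology induced by the surjection $P(k^{T})\to\fX(k^{T})$, so it is enough to check that the composite $P(k^{T})\to\fX(k^{T})\to\QQ/\ZZ$, the second arrow being $\langle b,-\rangle$, is continuous. Using the product decomposition of \ref{t:adeleTop} and its analogue for algebraic spaces, I would identify $P(k^{T})\cong \prod_{v\in T}P(k_{v})\times\prod_{v\not\in T}P(\sO_{v})$. For a point lifting to $(p_{v})_{v}$ the value of the pairing is $\sum_{v}\inv_{k,v}(p_{v}^{*}f^{*}b)$; but for $v\not\in T$ we have $v\not\in S_{0}$, so $p_{v}\in P(\sO_{v})$ and $f^{*}b$ is defined over $\sO_{v}$, whence $p_{v}^{*}f^{*}b\in\Br(\sO_{v})=0$. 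Thus the sum collapses to the finite sum $\sum_{v\in T}\inv_{k,v}(p_{v}^{*}f^{*}b)$.

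Each summand $p_{v}\mapsto \inv_{k,v}(p_{v}^{*}f^{*}b)$ is a function of the single coordinate $p_{v}\in P(k_{v})$, and it is continuous by the preceding proposition applied to the algebraic space $P$ over $k_{v}$ with the class $f^{*}b\in\Br(P)$ (the algebraic space case is treated explicitly in that proof). A finite sum of coordinatewise continuous functions on a product is continuous, so the composite on $P(k^{T})$ is continuous; by the universal property of the quotient topology this descends to continuity of $\langle b,-\rangle$ on $\fX(k^{T})$, and the colimit argument above then finishes the proof.

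I expect no genuinely new content to be needed here beyond the preceding local statement: the argument is essentially bookkeeping once that proposition is available. The step requiring the most care will be ensuring the defining sum is actually finite on each $\fX(k^{T})$ — this is exactly where spreading out the class so that it is unramified outside $S_{0}$, together with the vanishing $\Br(\sO_{v})=0$, is used — and then correctly threading the three topologies (the product topology on $P(k^{T})$, the quotient topology on $\fX(k^{T})$, and the colimit topology on $\fX(\bA_{k})$) so that continuity propagates through each layer.
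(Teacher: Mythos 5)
Your proposal is correct and follows essentially the same route as the paper: spread out the Brauer class via \ref{t:spreadBrauer}, reduce to continuity on each $\fX(k^{T})$ via the colimit topology, and verify that by lifting to a presentation and arguing as in the preceding local proposition. You merely make explicit the details (the product decomposition of $P(k^{T})$, the collapse of the sum using $\Br(\sO_{v})=0$, and the descent through the quotient topology) that the paper compresses into ``arguing as in the proof of the previous proposition.''
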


\begin{proof}
   By Theorem  \ref{t:stackSpread}, we may assume that $\fX$ is defined over some $\sO_{k,T}$. Further we can assume that $b$ is defined over 
   $\sO_{k,T}$ by Theorem \ref{t:spreadBrauer}. 

   Arguing as in the proof of the previous proposition, for each $T\supseteq S$, the function 
    $$
    \fX(k^T)\to \QQ/\ZZ\quad x\mapsto \langle b,x\rangle
    $$
    is continuous. The result follows from the construction of filtered colimits of topological spaces. 
\end{proof}

\begin{definition}\label{d:strongApprox}
    Let $S$ be a finite subset of $\Omega_{k}$ and let $\fX$ be an $S$-liftable algebraic stack over $k$. If $B$ is a subgroup of 
    $\Br(\fX)$ then we say that \emph{strong approximation for $\fX$ holds off $S$ with respect to $B$} if the diagonal image of $\fX(k)$ is dense inside 
    $\fX(\bA_{\cS,k})^B$.
\end{definition}

Our results below will verify this property for certain stacks, in particular for classifying stacks of  connected linear algebraic groups over $k$.

\section{Strong approximation for quotient stacks}

\subsection{Some facts about $\sl(V)$}

We let $V$ be a finite dimensional $k$-vector space and denote by $\sl(V)$ the special linear group of $V$.

\begin{proposition} \label{p:schemeBaseChange} 
    Let $X$ be a smooth $k$-variety. Consider the projection map $X\times_{k}\sl(V)\to X$. Then it induces isomorphisms 
    \begin{enumerate}
        \item $H^{0}(X,\Gm)\cong H^{0}(X\times_{k}\sl(V),\Gm)  $, 
        \item $H^{1}(X,\Gm)\cong H^{1}(X\times_{k}\sl(V),\Gm) $,
        \item $H^{2}(X,\Gm)\cong H^{2}(X\times_{k}\sl(V),\Gm) $. 
    \end{enumerate}
\end{proposition}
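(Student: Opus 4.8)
The plan is to analyze the smooth projection $\pi\colon X\times_{k}\sl(V)\to X$ through its Leray spectral sequence
$$
E_{2}^{pq}=H^{p}(X,R^{q}\pi_{*}\Gm)\implies H^{p+q}(X\times_{k}\sl(V),\Gm),
$$
and to reduce the whole statement to showing that the low-degree direct image sheaves are as simple as possible: $\pi_{*}\Gm\cong\Gm_{X}$, $R^{1}\pi_{*}\Gm=0$ and $R^{2}\pi_{*}\Gm=0$. Granting these, every term $E_{2}^{pq}$ with $p+q\le 2$ and $q\ge 1$ vanishes, so the bottom row is undisturbed by differentials in this range and the edge maps $H^{p}(X,\Gm)\to H^{p}(X\times_{k}\sl(V),\Gm)$ become isomorphisms for $p=0,1,2$, which is exactly parts (1)--(3). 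As a consistency check, the identity section $s\colon X\to X\times_{k}\sl(V)$, $x\mapsto(x,1)$, splits $\pi^{*}$ in every degree, so these edge maps are automatically injective and only surjectivity is genuinely at stake.

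Next I would compute the direct image sheaves fibrewise. Their stalks should be governed, via the base-change result \ref{c:stacksBaseChange} applied along the strict henselisations of the local rings of $X$ together with the limit theorem \ref{c:cohGeneric}, by the cohomology of $\sl(V)$ over a separably closed field $\Omega$, so everything reduces to three statements about $\sl(V)_{\Omega}$: that $\sO(\sl(V)_{\Omega})^{\times}=\Omega^{\times}$, that $\Pic(\sl(V)_{\Omega})=0$, and that $\Br(\sl(V)_{\Omega})=0$. The first holds because $\sl(V)$ equals its own commutator subgroup, hence has no nonconstant characters, so Rosenlicht's lemma forces every invertible function to be constant; this yields $\pi_{*}\Gm\cong\Gm_{X}$ and part (1). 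The second is the vanishing of the Picard group of a semisimple simply connected group, which gives $R^{1}\pi_{*}\Gm=0$ and part (2). Over $\overline{k}$ one can alternatively package these two inputs as the clean product formulas for units and Picard groups in the style of \cite{sansuc1981}, the cross terms vanishing precisely because $\sl(V)$ has no nonconstant units and trivial Picard group.

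The hard part will be part (3), i.e. the vanishing $R^{2}\pi_{*}\Gm=0$, which splits into two difficulties. The geometric input is that $\Br(\sl(V)_{\Omega})=0$ for $\Omega$ separably closed; I would deduce this from the rationality of $\sl(V)$ together with the triviality of its Picard group and the torsion-freeness of its degree-three cohomology, or else simply invoke the known computation of the Brauer group of a semisimple simply connected group. The subtler, more technical point is that $\Gm$ is not a torsion sheaf and $\pi$ is not proper, so the identification of the stalks of $R^{q}\pi_{*}\Gm$ with the cohomology of the geometric fibre is \emph{not} a formal consequence of proper or smooth base change; I would secure it by passing to the strict henselisations and applying \ref{c:cohGeneric}, or alternatively by running the entire comparison through the Hochschild--Serre spectral sequence over $k$ and a K\"unneth argument over $\overline{k}$ in the spirit of \cite{sansuc1981}. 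Once (3) is established, this proposition feeds directly into the Brauer-group isomorphism \ref{c:brIsom}.
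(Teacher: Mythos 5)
Your reduction of the statement to $\pi_*\Gm\cong\Gm$, $R^1\pi_*\Gm=0$, $R^2\pi_*\Gm=0$ via the Leray spectral sequence of the projection $\pi$ is formally correct, and the observation that the unit section splits $\pi^*$, so that only surjectivity is at stake, is a nice touch. The gap is in how you propose to establish the vanishing of the higher direct images. The stalk of $R^q\pi_*\Gm$ at a geometric point $x$ is $H^q(\Spec(\sO^{sh}_{X,x})\times\sl(V),\Gm)$, which by the limit theorem is $\varinjlim_U H^q(U\times\sl(V),\Gm)$ over affine \'etale neighbourhoods $U$ of $x$; it is \emph{not} the cohomology of the fibre $\sl(V)_\Omega$ over a separably closed field. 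To show this stalk vanishes you need, for each $U$, that a class on $U\times\sl(V)$ dies after shrinking $U$ \'etale-locally---and the only evident way to get that is to know already that $H^q(U\times\sl(V),\Gm)\cong H^q(U,\Gm)$, i.e.\ the proposition itself applied to the smooth variety $U$. This is why the paper proves the proposition first (by a different route) and only afterwards deduces $f_*\Gm\cong\Gm$ and $R^qf_*\Gm=0$ as a corollary; your plan inverts that logical order and is circular as stated. Moreover \ref{c:cohGeneric} cannot repair this: it spreads a class from the generic fibre out to an open subscheme of the base, which is the wrong direction for identifying a stalk at a closed geometric point, and there is no proper or smooth base change available for the non-torsion sheaf $\Gm$ on the non-proper morphism $\pi$---a difficulty you correctly flag but do not actually resolve.

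Your fallback option---running the comparison through the Hochschild--Serre/Leray spectral sequences for the structure maps to $\Spec(k)$ together with a geometric computation over $\overline{k}$ in the style of Sansuc---is exactly what the paper does. There one compares $E_2^{pq}=H^p(k,H^q(Y\otimes_k\overline{k},\Gm))$ for $Y=X$ and $Y=X\times\sl(V)$, and the needed geometric inputs are the ones you list: no nonconstant units on $\sl(V)_{\overline{k}}$ (Rosenlicht plus $\sl(V)$ being its own derived group), triviality of the Picard group (via rationality and Sansuc's exact sequences), and the $H^2$ comparison, which the paper obtains with $\mu_n$-coefficients via the Kummer sequence, the comparison theorem with the topological cohomology of the Lie group, and the K\"unneth formula, rather than by quoting $\Br(\sl(V)_{\overline{k}})=0$ directly. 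So the geometric content of your proposal is sound; it is the sheaf-theoretic packaging that must be replaced by this second route if the argument is to be non-circular.
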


\begin{proof}(Thanks to the referee.)
    The first assertion follows from \cite[6.5]{sansuc1981}. Note that in the notation of Sansuc, $U(\sl(V)):= k[\sl(V)]^\times/k^{\times}=0$. 
    This follows from \cite[theorem 3]{rosenlicht1961}, using the fact that $\sl(V)$ is equal to it's derived subgroup.  

    The second assertion follows immediately from \cite[6.6]{sansuc1981} and \cite[Lemma 6.9]{sansuc1981}.

    For the final assertion we start by observing that 
    $$
    H^{2}(X\otimes_{k} \overline{k}, \Gm) \cong H^{2}((X\times \sl(V))\otimes_{k} \overline{k}, \Gm), 
    $$
    via the projection map. To see this one applies the Kummer sequence
    $$
    1\to \mu_{n}\to \Gm \to \Gm \to 1
    $$
    and the fact that the proof of assertion (2) works over an algebraically closed field to reduce the question to showing that 
    $$
    H^{2}(X\otimes_{k} \overline{k}, \mu_{n}) \cong H^{2}((X\times \sl(V))\otimes_{k} \overline{k}, \mu_{n}).  
    $$
    Using the comparison theorem for cohomology, see \cite[Exp. XI]{SGA4}, we have 
    $$
    H^{1}(\sl(V_{ \overline{k}}), \mu_{n})=H^{2}(\sl(V_{ \overline{k}}), \mu_{n})=0
    $$
    by \cite[pg. 148, Thm 6.5]{lie} and the universal coefficient theorems. 
    The equality of $H^{2}$ with $\mu_{n}$-coefficients results from the Kunneth formula and hence we have the required isomorphism 
    of Brauer groups over an algebraically closed field. 

    To pass to a non-algebraically closed field we consider $\Br_1(Y) := \ker\left( \Br(Y)\to \Br(Y\otimes_k \bar{k})\right)$ and 
    and $\Br_a(Y) := \Br_1(Y)/\Br(k)$ where $Y$ is some algebraic variety. 

    As the the projection map $X\times \sl(V)\to X$ has a section we just need to show that the map 
    $$
    \Br(X)\to \Br(X\times_k\sl(V))
    $$
    is surjective.  
    By applications of the five lemma we are reduced to showing that the map induced by projection
    $$
    \Br_a(X)\to \Br_a(X\times_k \sl(V))
    $$
    is surjective. But this follows from \cite[6.6 (ii)]{sansuc1981} combined with \cite[Lemma 6.9 (iii)]{sansuc1981}.
\end{proof}

\begin{corollary}
    In the above situation, consider the projection map $f:X\times \sl(V)\to X$. Then we have 
    \begin{enumerate}
        \item $f_{*}\Gm \cong \Gm$,
        \item $R^{1}f_{*}\Gm =0$,
        \item $R^{2}f_{*}\Gm=0$. 
    \end{enumerate}
\end{corollary}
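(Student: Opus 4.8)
The plan is to deduce the higher direct image statement from the cohomological isomorphisms of Proposition~\ref{p:schemeBaseChange} by computing the stalks (or restrictions to \'{e}tale neighbourhoods) of $R^{i}f_{*}\Gm$ using a base-change argument. The morphism $f: X\times_{k}\sl(V)\to X$ is the projection, and for any \'{e}tale morphism $U\to X$ with $U$ a smooth affine $k$-variety (or more generally a henselian local scheme) the fibre product $U\times_{X}(X\times_{k}\sl(V))$ is just $U\times_{k}\sl(V)$, so the same three cohomological comparisons apply over $U$. This is exactly the setup where Proposition~\ref{c:stacksBaseChange} (proper/smooth base change in the form of \cite[Tag 075H]{stacks-project}) lets me identify the restriction of $R^{i}f_{*}\Gm$ with the relative cohomology computed fibrewise.

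First I would recall that $R^{i}f_{*}\Gm$ is the sheafification of the presheaf $U\mapsto H^{i}(U\times_{k}\sl(V),\Gm)$ on the \'{e}tale site of $X$. For $i=0$, left-exactness of $f_{*}$ together with part~(1) of Proposition~\ref{p:schemeBaseChange} applied over every \'{e}tale $U\to X$ gives $f_{*}\Gm(U)=H^{0}(U\times_{k}\sl(V),\Gm)\cong H^{0}(U,\Gm)=\Gm(U)$ compatibly with restriction, yielding the isomorphism $f_{*}\Gm\cong\Gm$ of sheaves. For $i=1$ and $i=2$ I would argue that the relevant stalks vanish: since $\sl(V)$ is geometrically connected and rational with trivial unit group and trivial Picard group (the facts extracted from \cite{sansuc1981} and \cite{rosenlicht1961} in the proof above), the comparison isomorphisms of parts~(2) and~(3) hold after any smooth base change, and in particular the relative cohomology sheaves $R^{1}f_{*}\Gm$ and $R^{2}f_{*}\Gm$ have vanishing stalks.

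Concretely, to show $R^{i}f_{*}\Gm=0$ for $i=1,2$ it suffices to check that the stalk at each geometric point $\bar{x}\to X$ vanishes. The stalk is the colimit of $H^{i}(U\times_{k}\sl(V),\Gm)$ over \'{e}tale neighbourhoods $U$ of $\bar{x}$, which by Theorem~\ref{t:cohSpread} (the SGA~4 limit theorem) computes $H^{i}(\sO_{X,\bar{x}}^{\mathrm{sh}}\times_{k}\sl(V),\Gm)$, where $\sO_{X,\bar{x}}^{\mathrm{sh}}$ is the strictly henselian local ring. Applying the comparison isomorphism of Proposition~\ref{p:schemeBaseChange} over $\Spec\sO_{X,\bar{x}}^{\mathrm{sh}}$ identifies this with $H^{i}(\Spec\sO_{X,\bar{x}}^{\mathrm{sh}},\Gm)$, which vanishes for $i\geq1$ because the Brauer group and Picard group of a strictly henselian local ring are trivial. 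Hence the stalks vanish and the sheaves are zero.

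The main obstacle I anticipate is ensuring that Proposition~\ref{p:schemeBaseChange} genuinely applies to the local (non-finite-type over $k$) scheme $\Spec\sO_{X,\bar{x}}^{\mathrm{sh}}$, since that proposition was stated for smooth $k$-varieties $X$. The cleanest fix is to phrase everything as a limit: write $\sO_{X,\bar{x}}^{\mathrm{sh}}=\varinjlim U$ over smooth affine \'{e}tale neighbourhoods, apply Proposition~\ref{p:schemeBaseChange} to each finite-type $U$, and then pass to the colimit via Theorem~\ref{t:cohSpread} on both sides simultaneously, using the functoriality of the comparison. This avoids having to re-prove the geometric cohomology computations for $\sl(V)$ over a non-finite-type base and lets me reduce everything to the already-established finite-type case, which is why I expect the argument to go through without serious difficulty once the limit bookkeeping is set up correctly.
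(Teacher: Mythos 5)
Your proposal is correct and follows essentially the same route as the paper's proof: compute stalks at geometric points via the strict henselisation, use the SGA~4 limit theorem to reduce to the comparison isomorphisms of Proposition~\ref{p:schemeBaseChange} over finite-type \'{e}tale neighbourhoods, and conclude from the vanishing of $H^{i}(\Spec\sO^{sh}_{X,x},\Gm)$ for $i=1,2$. The limit bookkeeping you flag as the main obstacle is exactly how the paper handles it, so there is nothing further to add.
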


\begin{proof}
    The etale site has enough points so the statements can be checked on stalks. Let $x:\spec( \overline{k} )\to X$ be a geometric point. Denote by
    $\sO^{sh}_{X,x}$ the strict Henselisation of the local ring at the image of $x$. We have a diagram 
    \begin{center}
        \begin{tikzcd}
            \Spec(\sO^{sh}_{X,x})\times \sl(V) \ar[d]\ar["p",r] & X\times\sl(V) \ar[d] \\
            \Spec(\sO^{sh}_{X,x})\ar[r] & X. 
        \end{tikzcd}
    \end{center}
    We have $p^{-1}\Gm =\Gm$, \cite[\href{https://stacks.math.columbia.edu/tag/04DI}{Tag 04DI}]{stacks-project}. The stalks can be computed as 
    $$
    H^{i}(\Spec(\sO^{sh}_{X,x})\times \sl(V),\Gm), 
    $$
    see \cite[\href{https://stacks.math.columbia.edu/tag/03Q7}{Tag 03Q7}]{stacks-project}. 
    Furthermore, let $N_{x}$ be the category of affine \etale\ neighbourhood of $x$. So that 
    $$
    \Spec(\sO^{sh}_{X,x})\times \sl(V) = \lim_{{U\in}N_{x}} N\times \sl(V), 
    $$
    so we may apply \cite[Expose V, 5.7-5.8]{SGA4}.  In view of the isomorphisms 
    $$
    H^{i}(U\times \sl(V),\Gm)\cong H^{i}(U,\Gm)
    $$
    we obtain 
    $$
    H^{i}(\Spec(\sO^{sh}_{X,x})\times \sl(V),\Gm)\cong H^{i}(\Spec(\sO^{sh}_{X,x}),\Gm)=0\quad i=1,2
    $$
    using the fact that $\sO^{sh}_{X,x}$ is local and \cite[pg. 148,2.13]{milne1980}. 
    The first assertion is proved in a similar way using the definition of the pushforward of sheaf on the \etale\ topos. 
\end{proof}

\subsection{Strong Approximation}\label{ss:strongApprox}

We will equip Artin stacks with their big \'{e}tale site. This has the advantage over the lisse-etale site in that it is functorial. 
Recall that we have defined the Brauer group of a stack to be the cohomological Brauer group in this topology. 

In this section $G/k$ will be a   linear algebraic group. Fix a faithful representation $G\hookrightarrow \sl(V)$. 
Such a representation always exists. For example, in view of the fact that $G$ is linear we can find a representation 
$G\hookrightarrow \gl_{n}$. Then we may compose it with the representation $\gl_{n}\hookrightarrow \sl_{2n}$ given by 
$$
M \longmapsto \begin{pmatrix}
    M & 0 \\
    0 & (M^t)^{-1}
\end{pmatrix}. 
$$

Given a quotient stack of the form $[Y/G]$ we may write it as an $\sl(V)$-quotient
$$
[Y/G]\cong [Y\times_{G}\sl(V)/\sl(V)],
$$
see Lemma \ref{l:little}. 
The object $X:= Y\times_{G}\sl(V)$ always exists as an algebraic space. If $G$ is reductive and $Y$ is quasi-projective with a linearised action then
the quotient will be a scheme, see \cite{mumford}.

\begin{proposition} 
    \label{c:brIsom}
    Let $X$ be a smooth variety with an action of $\sl(V)$. 
    Consider the morphism $$f:X\to [X/\sl(V)].$$ Then
    $$
    f^{*}: \Br([X/\sl(V)]) \to \Br(X)
    $$
    is an isomorphism. 
\end{proposition}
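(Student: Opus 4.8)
The plan is to compute $H^{2}(\fX,\Gm)$ for $\fX:=[X/\sl(V)]$ directly from the atlas $f:X\to\fX$ and then restrict to torsion. Since $X\times_{\fX}X\cong X\times\sl(V)$, the \v{C}ech nerve of $f$ is the bar construction of the action groupoid: it is the simplicial scheme $X_{\bullet}$ with $X_{p}\cong X\times\sl(V)^{p}$, face maps $\partial_{0}(x,g_{1},\dots,g_{p})=(xg_{1},g_{2},\dots,g_{p})$, $\partial_{i}=$ multiplication of adjacent group factors for $1\le i\le p-1$, and $\partial_{p}=$ projection off the last $\sl(V)$. By cohomological descent along the presentation, $H^{n}(\fX,\Gm)\cong H^{n}(X_{\bullet},\Gm)$ (\cite[\href{https://stacks.math.columbia.edu/tag/06XF}{Tag 06XF}]{stacks-project}), and the latter is the abutment of the first-quadrant spectral sequence $E_{1}^{pq}=H^{q}(X_{p},\Gm)\Rightarrow H^{p+q}(X_{\bullet},\Gm)$ of the simplicial approach.

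First I would identify the $E_{1}$-page in the range $q\le 2$. Each $X_{p}=X\times\sl(V)^{p}$ is a smooth $k$-variety, so applying Proposition \ref{p:schemeBaseChange} to the projection $X\times\sl(V)^{p}\to X\times\sl(V)^{p-1}$, with $X\times\sl(V)^{p-1}$ in the role of the base variety, and iterating, the total projection $\pi_{p}:X_{p}\to X$ induces isomorphisms $\pi_{p}^{*}:H^{q}(X,\Gm)\xrightarrow{\sim}H^{q}(X_{p},\Gm)$ for $q=0,1,2$ and every $p\ge 0$. Thus each of the bottom three rows of $E_{1}$ is the constant cosimplicial group $H^{q}(X,\Gm)$.

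The crux is to evaluate the differential $d_{1}=\sum_{i}(-1)^{i}\partial_{i}^{*}$ under these identifications, that is, to show each face map $\partial_{i}^{*}$ becomes the identity of $H^{q}(X,\Gm)$. For $i\ge 1$ one has $\pi_{p-1}\circ\partial_{i}=\pi_{p}$ on the nose, so $\partial_{i}^{*}\circ\pi_{p-1}^{*}=\pi_{p}^{*}$ and $\partial_{i}^{*}=\mathrm{id}$. The only subtle face is $\partial_{0}$, which involves the action; here I would use the unit section $e:\Spec k\to\sl(V)$. Writing $a:X\times\sl(V)\to X$ for the action and $\mathrm{pr}:X\times\sl(V)\to X$ for the projection, the identities $a\circ(\mathrm{id},e)=\mathrm{id}_{X}=\mathrm{pr}\circ(\mathrm{id},e)$ together with the fact that $\mathrm{pr}^{*}$ is an isomorphism force $a^{*}=\mathrm{pr}^{*}$ on $H^{q}$ for $q\le 2$; factoring $\pi_{p-1}\circ\partial_{0}$ through $a$ then yields $\partial_{0}^{*}=\mathrm{id}$ as well. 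Consequently $d_{1}=\sum_{i=0}^{p+1}(-1)^{i}\,\mathrm{id}$ alternates between $0$ and $\mathrm{id}$, and the rows $q=0,1,2$ compute to $E_{2}^{0,q}=H^{q}(X,\Gm)$ and $E_{2}^{p,q}=0$ for all $p\ge 1$.

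Finally I would read off total degree $2$. The contributions are $E_{2}^{0,2}=H^{2}(X,\Gm)$, $E_{2}^{1,1}=0$ and $E_{2}^{2,0}=0$; moreover no higher differential can enter or leave $E_{r}^{0,2}$, since its potential source and target lie in the vanishing region $p\ge1,\ q\le2$. Hence $E_{\infty}^{0,2}=H^{2}(X,\Gm)$ is the only nonzero graded piece in total degree $2$, and the edge homomorphism $H^{2}(\fX,\Gm)\to E_{\infty}^{0,2}\hookrightarrow H^{2}(X_{0},\Gm)=H^{2}(X,\Gm)$ is exactly $f^{*}$. Thus $f^{*}$ is an isomorphism on $H^{2}(-,\Gm)$, and restricting to torsion subgroups gives the claimed isomorphism $f^{*}:\Br(\fX)\xrightarrow{\sim}\Br(X)$. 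I expect the differential computation of the third paragraph, and in particular the handling of the action face map $\partial_{0}$ via the unit section, to be the main obstacle.
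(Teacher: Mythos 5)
Your proof is correct and follows essentially the same route as the paper: both run the descent spectral sequence $E_{1}^{pq}=H^{q}(X\times\sl(V)^{\times p},\Gm)$ for the atlas $X\to[X/\sl(V)]$ and feed in Proposition \ref{p:schemeBaseChange} on the rows $q\le 2$. The only difference is in execution: where you compute the $E_{2}$-page directly by evaluating the bar differentials (using the unit section to identify the action face with the projection), the paper instead compares this spectral sequence with the analogous one for the split cover $X\times\sl(V)\to X$, whose $E_{1}$-terms agree in the relevant range via projections, thereby avoiding any explicit differential computation.
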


\begin{proof} 
   We have a cartesian diagram 
   \begin{center}
    \begin{tikzcd}
        X\times\sl(V) \ar[r]\ar[d] & X\ar[d] \\
        X\ar[r] & \left[X/\sl(V) \right]. 
    \end{tikzcd}
   \end{center}
   The top horizontal map is the action and the left vertical map is projection. There is an automorphism of $X\times\sl(V)$ that switches these, so both maps are 
   smooth. 
   We may apply the spectral sequence \cite[\href{https://stacks.math.columbia.edu/tag/06XJ}{Tag 06XJ}]{stacks-project}
   to both of these maps. 
   The spectral sequence for $X\to [X/\sl(V)]$ has  
   $$
   E_{1}^{pq} = H^{q}(X\times \sl(V)^{\times p}, \Gm). 
   $$
   The spectral sequence for the action $X\times \sl(V)\to X$ has 
   $$
   E_{1}^{pq} = H^{q}(X\times \sl(V)^{\times p+1}, \Gm). 
   $$
   The induced maps 
   $$
   H^{q}(X\times \sl(V)^{\times p}, \Gm) \to H^{q}(X\times \sl(V)^{\times p+1}, \Gm). 
   $$
   are induced by projections so the result follows from Proposition \ref{p:schemeBaseChange}. 
\end{proof}

\begin{theorem}\label{t:roy}
    Let $S$ be a finite subset of $\Omega_{k}$ containing all infinite places. 
    Let $G$ be a linear algebraic group defined and smooth over $\sO_{k,S}$. Fix a faithful representation 
    $$
    G\hookrightarrow \sl(V)
    $$
    over $\sO_{k,S}$. Suppose that $Y$ is a smooth and separated algebraic space over $\sO_{k,S}$ with an action of $G$. 
    If $(Y\times_G \sl(V))\otimes_{\sO_{k,S}} k$ is a variety and strong approximation holds for 
    $Y\times_{G}\sl(V)$ off $S$ with respect to $\Br(Y\times_{G}\sl(V))$. Then strong 
    approximation holds for $[Y/G]$ off $S$ with respect to $\Br([Y\times_{G}\sl(V)/\sl(V)])$. 
\end{theorem}

\begin{proof}
    In what follows we will write $f$ for the presentation 
    $$
    Y\times_{G}\sl(V) \to [Y\times_{G}\sl(V)/\sl(V)]. 
    $$
    We will abuse notation and write $f$ for the map induced on $R$-points for various rings $R$. 

    The discussion after \ref{d:lifting} shows that the presentation $f$ is a lifting presentation. 
    Further, the same argument in that discussion shows that
    $$
    Y\times_G \sl(V) \to \sl(V)/G
    $$
    is a smooth morphism as being smooth is local in the smooth topology, see \cite[\href{https://stacks.math.columbia.edu/tag/06FC}{Tag 06FC}]{stacks-project}. 
    It follows that $Y\times_G \sl(V)$ is smooth over $\sO_{k,S}$. 

    Let $x\in [Y\times_{G}\sl(V)/\sl(V)](\bA_{\cS,k})$  be a point orthogonal to $\Br([Y\times_{G}\sl(V)/\sl(V)])$. We can lift it to a point 
    $y\in Y\times_{G}\sl(V)(\bA_{\cS,k})$. In view of the above proposition, if $x$ is orthogonal to $\Br([Y\times_{G}\sl(V)/\sl(V)])$ then 
    $y$ is orthogonal to $\Br(Y\times_{G}\sl(V))$ as the Brauer-Manin pairing is easily seen to be functorial, i.e 
    $$
    \langle x,b\rangle = \langle y,f_{*}b\rangle. 
    $$
    If $U$ is an open neighbourhood of $x$ in the adelic topology then $f^{-1}(U)$ is an open neighbourhood of $y$ in the 
    adelic topology. Then there a $k$-point $t\in Y\times_{G}\sl(V)(k)$ in $f^{-1}(U)$. It follows that $f(t)$ is a $k$-point in $U$. 
\end{proof}

To generate examples to which this theorem applies, we may use the ideas of \cite{cao2018}. A variety with $X$ with $G$-action is said to be groupic
if there is a dense equivariant open subset of $X$ that is isomorphic to $G$ as a $G$-variety. It is easy to see that if $X$ is $G$-groupic 
then $X\times_{G}\sl(V)$ is $\sl(V)$ -groupic as 
$$
G\times_{G}\sl(V)\cong\sl(V). 
$$
The main theorem of \cite[1.3]{cao2018} can be used to generate examples. In this vein, quotients of toric varieties by included tori can be used to provide examples.

We change direction now and consider classifying stacks, which are not covered by the above examples. 
In \cite[6.1]{borovoi2013} and \cite[3.7(b)]{jlct2009} a kind of equivariant strong approximation theorem for homogeneous spaces is proved. More precisely consider $H$ a simply connected semi-simple 
linear algebraic group and $G$ a connected subgroup. Let $X=G\backslash H$. If $S$ is a finite set of places containing 
such that strong approximation holds for $H$ off $S$. 
Then \cite[6.1]{borovoi2013} says that $X(k)H(k_S)$ is dense 
is dense in the Brauer-Manin fixed point locus of $X(\bA_{\cS,k})$. We will make use of this result with $H=\sl(V)$. The fact that strong approximation holds for $\sl(V)$ 
(off S) follows from a classical theorem of Knesser. 

\begin{theorem}\label{t:sApproxBG}
    Let $G$ be a  connected linear algebraic group. 
    Let $S$ be a finite set of places of $k$ that contains at least one finite place and all archimedean places. 
    Then strong approximation for $BG$ off $S$ with respect to $\Br(BG)$ holds. 
\end{theorem}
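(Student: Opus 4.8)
The plan is to reduce strong approximation for $BG$ to an \emph{equivariant} strong approximation statement for a single homogeneous space, and then to invoke the theorems of \cite{borovoi2013} and \cite{jlct2009}. First I would fix the faithful representation $G\hookrightarrow \sl(V)$ constructed above and set
$$
X := \spec(k)\times_{G}\sl(V)\cong \sl(V)/G,
$$
the homogeneous space of the semisimple simply connected group $H:=\sl(V)$ with stabiliser $G$. By \ref{l:little} we have $BG\cong[X/\sl(V)]$, and $X$ is a smooth quasi-projective $k$-variety (a Chevalley quotient). Writing $f\colon X\to BG$ for the presentation, \ref{p:projTriv} shows that every $\sl(V)$-torsor over $\bA_{\cS,k}$ is trivial, so $f$ is $S$-liftable and
$$
f\colon X(\bA_{\cS,k})\to BG(\bA_{\cS,k})
$$
is a surjective open quotient map whose fibres are exactly the $\sl(V)(\bA_{\cS,k})$-orbits; here one uses that $H^{1}(R,\sl(V))$ vanishes, so $X(R)/\sl(V)(R)\cong H^{1}(R,G)=BG(R)$ for $R=k,k_{v},\bA_{\cS,k}$.

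Next I would transport the Brauer--Manin condition across $f$. By \ref{c:brIsom} the pullback $f^{*}\colon\Br(BG)\to\Br(X)$ is an isomorphism, and the Brauer--Manin pairing is functorial, i.e.\ $<x,b>=<y,f^{*}b>$ whenever $y$ lifts $x$ (exactly as in the proof of \ref{t:roy}). Consequently $x\in BG(\bA_{\cS,k})$ is orthogonal to $\Br(BG)$ if and only if any lift $y\in X(\bA_{\cS,k})$ is orthogonal to $\Br(X)$, so $f$ restricts to a surjection $X(\bA_{\cS,k})^{\Br(X)}\twoheadrightarrow BG(\bA_{\cS,k})^{\Br(BG)}$. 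Since classes pulled back from $\Br(k)$ pair trivially with adelic points, one has $X(\bA_{\cS,k})^{\Br(X)}\subseteq X(\bA_{\cS,k})^{\Br_{a}(X)}$, which is the locus controlled by the cited theorems.

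The decisive point, and the reason the statement is \emph{not} a corollary of \ref{t:roy}, is that full strong approximation for the homogeneous space $X$ (density of $X(k)$ itself in $X(\bA_{\cS,k})^{\Br(X)}$) fails in general. What survives, and what suffices here, is the weaker \emph{equivariant} version: because $BG=[X/\sl(V)]$ carries the quotient topology, to approximate a rational point of $BG$ I am free to move within $\sl(V)(\bA_{\cS,k})$-orbits. Concretely, it is enough that the $\sl(V)(\bA_{\cS,k})$-saturation of $X(k)$ be dense in $X(\bA_{\cS,k})^{\Br_{a}(X)}$, i.e.\ that $y\in\overline{\sl(V)(\bA_{\cS,k})\cdot X(k)}$ for every such $y$. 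This is precisely the equivariant strong approximation proved for homogeneous spaces of semisimple simply connected groups in \cite[6.1]{borovoi2013} and \cite[3.7(b)]{jlct2009}, whose hypotheses are met: $\sl(V)$ is semisimple, simply connected and isotropic at every place, and $S$ contains all archimedean places together with at least one finite place, supplying the non-compactness needed for strong approximation off $S$.

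Granting this, the conclusion is formal. Given $x\in BG(\bA_{\cS,k})^{\Br(BG)}$ and an open neighbourhood $U\ni x$, choose a lift $y\in X(\bA_{\cS,k})^{\Br_{a}(X)}$; then $f^{-1}(U)$ is an $\sl(V)(\bA_{\cS,k})$-saturated open neighbourhood of $y$. Equivariant strong approximation produces $g\in\sl(V)(\bA_{\cS,k})$ and $t\in X(k)$ with $g\cdot t\in f^{-1}(U)$, and saturation forces $t\in f^{-1}(U)$, whence $f(t)\in BG(k)\cap U$. This gives density of $BG(k)$ in $BG(\bA_{\cS,k})^{\Br(BG)}$, which is strong approximation off $S$ with respect to $\Br(BG)$. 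The main obstacle I anticipate is matching the exact formulation of the theorems in \cite{borovoi2013,jlct2009} to the present setup: one must confirm that they deliver density of the $\sl(V)(\bA_{\cS,k})$-saturation of the rational points (rather than of the rational points themselves) in the algebraic Brauer--Manin locus, and track the precise role of the place conditions on $S$ and the passage between $\Br$, $\Br_{1}$ and $\Br_{a}$.
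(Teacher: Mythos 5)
Your proposal is correct and follows essentially the same route as the paper: present $BG$ as $[X/\sl(V)]$ with $X=G\backslash\sl(V)$, lift a Brauer--Manin orthogonal adelic point via \ref{p:projTriv} and \ref{c:brIsom}, invoke the equivariant strong approximation theorem of \cite[6.1]{borovoi2013} (cf. \cite[3.7(b)]{jlct2009}), and conclude from the $\sl(V)$-invariance of the presentation map. Your added care about the saturation of $f^{-1}(U)$ and the passage between $\Br$ and $\Br_{a}$ only makes explicit what the paper's proof leaves implicit.
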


\begin{proof}
    Fix a faithful representation 
    $$
    G\hookrightarrow \sl(V). 
    $$
    Let $X=G\setminus \sl(V)$ which is smooth. Then there is a presentation 
$$
f: X \to [X/\sl(V)] \cong BG. 
$$
The isomorphism is from Lemma \ref{l:little}. We will abuse notation and write $f$ for the map induced on $R$-points for various rings $R$.

   Now consider a point $\bar{x}\in BG(\bA_{S})^{\Br(BG)}$ which can be lifted to a point $x\in X(\bA_{S})^{\Br(X)}$, using 
   Corollary \ref{c:brIsom} and Proposition \ref{p:projTriv}. 
   
   Let $U$ be an adelic neighbourhood  of $\bar{x}$ so that $f^{-1}(U)$ is a neighbourhood of $x$. 
   Observe that strong approximation holds for $\sl(V)$, see \cite{Kenser1965} and \cite{Platonov1969}. So we may apply
   \cite[Theorem 6.1]{borovoi2013}. 
   By this theorem we can find a rational point $t\in X(k)$ and a point $g\in \sl(V)(k_S)$ so that $g.t$ belongs to $f^{-1}(U)$. 
   Now, $f(t)=f(gt)\in U$. Further, although $gt$ is not $k$-rational, we have that $f(gt)=f(t)\in U$ is. Hence the result.   
\end{proof}

\section*{Declarations}

The author has no conflicts of interest. 

No data was used in creating this manuscript. 

\printbibliography
\end{document}